\newtheorem{theorem}{Theorem}[section]
\newtheorem*{theorem*}{Theorem}
\newtheorem{lemma}[theorem]{Lemma}
\newtheorem{corollary}[theorem]{Corollary}
\newtheorem{definition}[theorem]{Definition}
\theoremstyle{definition}
\newtheorem{example}[theorem]{Example}
\newtheorem{remark}[theorem]{Remark}
\newcommand{\itemref}[1]{\ref{#1}}
\newcommand{\Z}{\mathbb{Z}}
\newcommand{\R}{\mathbb{R}}
\newcommand{\C}{\mathbb{C}}
\newcommand{\bigOh}{\mathcal{O}}
\newcommand{\eps}{\varepsilon}
\newcommand{\Expect}{\mathbb{E}}
\newcommand{\Var}{\mathbb{V}}
\DeclareMathOperator{\Cov}{Cov}
\DeclareMathOperator{\grad}{grad}
\DeclareMathOperator{\sign}{sign}
\DeclareMathOperator{\diag}{diag}
\newcommand{\T}{\mathcal{T}}
\newcommand{\inputsum}{\mathsf{Input}}
\newcommand{\out}{\mathsf{Output}}
\newcommand{\mmatrix}{I-\frac zK\sum_{\eps\in\mathcal A_{I}}x^{\eps}M_{\eps}(y)}
\newcommand{\bfomega}{\boldsymbol{\Omega}_{n}}
\newcommand{\bfs}{\boldsymbol{s}}
\newcommand{\bfzero}{\boldsymbol{0}}
\newcommand{\bfone}{\boldsymbol{1}}
\newcommand{\mw}{\mathsf{mw}}
\renewcommand{\MR}[1]{}
\title[Variances and Covariances of the Output of a Transducer]{Variances and Covariances in the Central
  Limit Theorem for the Output of a Transducer}
\author{Clemens Heuberger}
\address{Institut f\"ur Mathematik, Alpen-Adria-Universit\"at Klagenfurt, Austria}
\email{clemens.heuberger@aau.at}
\thanks{The first two authors are supported by the Austrian Science Fund (FWF):
  P~24644-N26.}
\author{Sara Kropf}
\address{Institut f\"ur Mathematik, Alpen-Adria-Universit\"at Klagenfurt, Austria}
\email{sara.kropf@aau.at}
\author{Stephan Wagner}
\address{Department of Mathematical Sciences, Stellenbosch University, South
  Africa}
\email{swagner@sun.ac.za}
\thanks{The third author is supported by the National Research Foundation of South Africa under grant number 70560.}
\keywords{Hamming weight, variance, covariance, independence, Matrix-Tree Theorem, transducer,
central limit theorem, Quasi-Power Theorem.}
\subjclass[2010]{60F05; 68W40 05C20 05C30 11B85 68Q45}
\begin{document}

\begin{abstract}
We study the joint distribution of the input sum and the output sum of a
deterministic transducer. Here, the input of this finite-state machine
is a uniformly distributed random sequence.

We give a simple combinatorial characterization of transducers for which the output sum has bounded variance, and we
also provide algebraic and combinatorial characterizations of transducers for which the covariance of input and output sum
is bounded, so that the two are asymptotically independent.

Our results are illustrated by several examples, such as transducers that count specific blocks in the binary expansion, the transducer that computes
the Gray code, or the transducer that computes the Hamming weight of the width-$w$ non-adjacent form digit expansion. 
The latter two turn out to be examples of asymptotic independence.
\end{abstract}
  
\maketitle

  \section{Introduction}
We consider sequences defined as the sum of the output of a deterministic transducer, i.e. a finite-state machine that deterministically transforms an input sequence into an output
sequence. Here, we let both the input and the output be sequences of real numbers and assume that the input sequence is randomly generated.
Then, while the output depends deterministically on the input, the dependence between the two random variables ``sum of the input sequence'' and ``sum of the output sequence'' may become negligible for long input sequences. We investigate for which transducers this is the case. We give two different
characterizations of such ``independent'' transducers, an algebraic and a
combinatorial one. In a similar way, we also consider the
variance of the sum of the output of a transducer. We prove a combinatorial
characterization of transducers with bounded variance of the output
sum. These combinatorial characterizations are described in terms of a
weighted number of functional digraphs or cycles of the underlying graph.

Our probability model is the equidistribution on all input sequences of a fixed
length $n$. We asymptotically investigate the two random variables ``sum of the input'' and ``sum of
the output'' of a transducer for $n\to\infty$. If these two random variables
converge in distribution to
independent random variables, then the transducer is called independent.

Under this probability model, the expected value of the sum
of the input and the output are $e_{1}n$ and $e_{2}n+\bigOh(1)$,
respectively, for some constants $e_{1}$ and $e_{2}$. For the sum of the
input, the expressions are exact without error term because the input letters
are independent and identically distributed. Furthermore, under appropriate
connectivity conditions, the variances and the covariance turn out to be  $v_{1}n$,
$v_{2}n+\bigOh(1)$ and  $cn+\bigOh(1)$, respectively, for suitable constants
$v_{1}$, $v_{2}$ and $c$. We investigate for which transducers one of the
constants $v_{2}$ and $c$ is zero.

A special case
of the output sum is the Hamming weight, which is the number of nonzero elements of a sequence. 
To give an example of an independent transducer, we later discuss the Hamming weight of
the non-adjacent
form as defined by Reitwiesner~\cite{Reitwiesner:1960}. The non-adjacent form is
the unique digit expansion with digits $\{-1,0,1\}$, base $2$ and the
syntactical rule that at least one of any two adjacent digits has to be
$0$. It has minimal Hamming weight among all digit expansions with digits
$\{-1,0,1\}$ in base $2$. In~\cite{Heuberger-Prodinger:2007:hammin-weigh}, Heuberger and Prodinger
prove that the Hamming weights of the standard binary expansion and the
non-adjacent form are asymptotically independent.  The independent transducer computing
these Hamming weights is shown in Figure~\ref{fig:NAF}.

There are many results on the variance of the sum of the output of explicit
transducers under the same probability model we use. See,
for example, \cite{Grabner-Heuberger-Prodinger:2004:distr-results-pairs,Grabner-Heuberger-Prodinger-Thuswaldner:2005:analy-linear,Avanzi-Heuberger-Prodinger:2006:scalar-multip-koblit-curves,Heigl-Heuberger:2012:analy-digit} for the variance of the Hamming weight
of different digit expansions which are computed by
transducers. In~\cite{Heuberger-Prodinger:2006:analy-alter}, the authors count
the occurrences of a digit and give the expected value, the variance and the
covariance between two different digits. The occurrence of a specific
pattern in a word is investigated in e.g.~\cite{Bender-Kochman:1993:distr-subwor,Nicodeme-Salvy-Flajolet:2002:motif,Flajolet-Szpankowski-Vallee:2006:hidden-word-statis,Goldwurm-Radicioni:2007:averag-value}
(with generalizations to other probability
models, too). In~\cite{Bender-Kochman:1993:distr-subwor}, the covariance
between different patterns is also considered. 
In~\cite{Grabner-Thuswaldner:2000:sum-of-digits-negative}, Grabner and Thuswaldner
consider a transducer whose output is the sum of digit function. However, they were only interested in the 
output and did not consider the joint distribution or the covariance of the
input and output sum.

By contrast, we are interested
in the joint distribution of the input and output sum for a general transducer. We not only algebraically
compute the expected value and the variance-covariance matrix of this
distribution, but we also give combinatorial descriptions of these
values. In particular, we combinatorially characterize independent transducers
and transducers with bounded variance of the output sum.
This combinatorial connection is described by a condition on some weighted number of
functional digraphs or on each cycle of the underlying graph of the
transducer. 
To obtain these results, we apply a generalization of the
Matrix-Tree Theorem by Chaiken~\cite{Chaiken:1982:matrixtree} and
Moon~\cite{Moon:1994:matrixtree}.

We formally define our setting in the next section. In
Section~\ref{sec:main-results}, we state our main results. In
Section~\ref{sec:ex}, we present several examples where these main
results are applied. In the last section, we give the proofs of the theorems.

In many contexts, an unbounded
 variance (as in~\cite{Hwang:1998}) is necessary to prove a Gaussian limit law. In Theorem~\ref{thm:var}, we combinatorially describe transducers whose output sums have bounded variance. For strongly connected transducers, we prove that this is the case if and only if
there exists a constant such that for each cycle, the output sum is
proportional to its length with this proportionality constant. This in turn is
equivalent to a quasi-deterministic output sum in the sense that the difference of
the output sum and its expected value is bounded for \emph{all} events, independently of
the length of the input.
In the special case where the transducer is strongly connected and aperiodic and the only possible outputs are $0$ and $1$, it turns out that the output sum has asymptotically bounded variance if and only
if the output is constant for all transitions (Corollary~\ref{cor:01output}). 
The assumption of strong connectivity can be relaxed for most results.

We give an
algebraic description of independent transducers in Theorem~\ref{thm:alg}. We
also state there that the input sum and the output sum are asymptotically jointly normally distributed
if the variance-covariance matrix is invertible. In Theorem~\ref{thm:comb}, we present a combinatorial characterization of
independent transducers.

In Section~\ref{sec:ex}, we give a variety of examples of
independent and dependent transducers and transducers with bounded and
unbounded variance to illustrate our results. One of those examples is a transducer
computing the minimal Hamming weight of $\tau$-adic digit representations on
a digit set $\mathcal D$. Building on the results of~\cite{Heigl-Heuberger:2012:analy-digit}, we prove that the variance of the minimal
Hamming weight is unbounded, which yields a central limit theorem.

In Section~\ref{sec:proofs-theorems}, we also prove an extension of the
$2$-dimensional Quasi-Power Theorem~\cite{Heuberger:2007:quasi-power} to
singular Hessian matrices as an auxilliary result.

The results of Theorem~\ref{thm:alg} have been implemented \cite{Heuberger-Kropf:trac-16145} in the computer
algebra system \textsf{Sage}~\cite{Stein-others:2014:sage-mathem-6.3}, based on its
package for finite state
machines~\cite{Heuberger-Krenn-Kropf:ta:finit-state}. This code is included in
\textsf{Sage}~6.3.

\section{Preliminaries}\label{sec:preliminaries}

A \emph{transducer} is defined to consist of a finite set of states
$\{1,2,\ldots,S\}$, a finite input alphabet $\mathcal A_{I}\subseteq\R$, an output alphabet
$\mathcal A_{O}\subseteq \R$,
 a set of transitions $\mathcal E\subseteq\{1,2,\ldots,S\}^{2}\times\mathcal A_{I}$ with input labels in $\mathcal A_{I}$, output labels
$\delta\colon \mathcal E\to \mathcal A_{O}$ and the initial state $1$. The
transducer is called \emph{deterministic} if for all states $s$ and input labels
$\eps\in\mathcal A_{I}$, there exists at most one state $t$ such that
$(s,t,\eps)\in\mathcal E$.
Furthermore, the transducer is said to be \emph{subsequential} (cf.~\cite{Schuetzenberger:1977}) if it is
deterministic, every state is final and it has a final output $a\colon \{1,2,\ldots,S\}\to
\mathcal A_{O}$. A transducer is called \emph{complete} if for every state $s$ and
digit $\eps\in\mathcal A_{I}$, there is a transition from $s$ to a state $t$ with
input label $\eps$, i.e.,
$(s,t,\eps)\in\mathcal E$.

\begin{definition}
A transducer is said to be \emph{finally connected} if there exists a state
which can be reached from any other state. The \emph{final component} of such a
transducer is defined to be the transducer induced by the set of
states which can be reached from any other state. A finally connected
transducer is said to be \emph{finally aperiodic} if the underlying graph of the final
component is aperiodic (i.e., the gcd of the lengths of all walks starting and ending at a given vertex is $1$).
\end{definition}

\begin{remark}
The
final component of a transducer is a strongly connected component of the underlying graph of the
transducer.
If the underlying graph is strongly connected, then being finally aperiodic is
equivalent to being aperiodic. We then call the transducer strongly connected
and aperiodic. The final component of a complete transducer is
complete itself.
\end{remark}

In the following, we consider subsequential, complete, deterministic, finally connected, finally
aperiodic transducers. We require that the input
alphabet $\mathcal A_{I}$ has at least
two elements. Throughout the paper, we use $\eps$ for the input of a transition and
$\delta$ for the output of a transition. We denote the number of states in the final
component by $N$.

The input of the transducer is a sequence in $\mathcal A_{I}^{*}$. It is not
important whether we read the input from right to left or in the other direction,
we just have to fix it for one specific transducer. The output of the
transducer is the sequence of output labels of the unique path starting at the
initial state $1$ with the given input as input label, together with the final
output label of the final state of this path.

\begin{definition}
The \emph{Hamming weight} of a finite sequence $(n_{0},\ldots,n_{L})$ is the number of nonzero elements of the sequence.
\end{definition}

\begin{example}\label{ex:first}
The transducer in Figure~\ref{fig:NAF} is a subsequential, complete, strongly
connected, aperiodic transducer. It
computes the Hamming weight of the non-adjacent form when reading the binary
expansion from right to left. The transducer is a slight simplification of
the one in, e.g.,~\cite{Heuberger-Prodinger:2007:hammin-weigh}, taking into
account that we are only interested in the Hamming weight.

For example, the non-adjacent form of $12$ is $(10(-1)00)_{\text{NAF}}$ and
has Hamming weight $2$. When reading the standard binary
expansion $(1100)_{2}$ of $12$, the transducer in
Figure~\ref{fig:NAF} writes the output $10100$. The leftmost $1$ in the output
is the final output of the last state. The sum of the output is $2$, too.
\end{example}

Let $X_{n}$ be a uniformly distributed random variable on
$\mathcal A_{I}^{n}$. Let $\out(X_{n})$ be the sum of the output sequence of the
transducer if the input is $X_{n}$. Furthermore, let $\inputsum(X_{n})$ be the sum
of the input sequence. Without loss of generality, we fix the direction of
reading from right to left.

We investigate the $2$-dimensional random vector
\begin{equation*}\bfomega=(\inputsum(X_{n}),\out(X_{n}))^{t}\end{equation*} for $n\to\infty$, where ${}^{t}$ denotes
transposition. We will prove that each component of this random vector either converges in
distribution to a normally distributed random variable or to a degenerate
random variable. Here, a random variable is said to be \emph{degenerate} if it is
constant with probability $1$. By definition, a degenerate random variable is independent of
any other random variable. Thus, the variance of a degenerate random variable
and the covariance of a degenerate and any other random variable are always
$0$.

For a finally connected, aperiodic transducer, the expected value and the variance of
$\bfomega$ will turn out to be
$(e_{1}, e_{2})^{t}n+\bigOh(1)$, $(v_{1}, v_{2})^{t}n+\bigOh(1)$,
respectively, for suitable constants $e_{1}$, $e_{2}$, $v_{1}$ and
$v_{2}$ (see Theorem~\ref{thm:alg}). The covariance between the two coordinates will be $cn+\bigOh(1)$ for some constant $c$.
We call $\Sigma=\bigl(\begin{smallmatrix}v_{1}&c\\c&v_{2}\end{smallmatrix}\bigr)$ the \emph{asymptotic variance-covariance matrix} of $\bfomega =(\inputsum(X_n), \out(X_n))^t$. Its
entries are called the \emph{asymptotic variances} and the \emph{asymptotic
  covariance}.

A special case is a transducer with output alphabet $\{0,1\}$. 
If we consider
a transducer computing, for example, a new digit expansion and we are only
interested in the Hamming weight of this new digit expansion, we map the
output of each transition to the alphabet $\{0,1\}$. In this way, we obtain a
new transducer with output alphabet $\{0,1\}$ computing the Hamming weight of
this new digit expansion. In this case, the combinatorial characterization of
a bounded variance of the output sum is particularly simple (see Corollary~\ref{cor:01output}).

For brevity, we introduce the notion of independent transducers.
\begin{definition}
A transducer is \emph{independent} if the random vector $\bfomega$
converges in distribution to a random vector with two independent
components, i.e., the sum of the input $\inputsum(X_{n})$ and the sum of the output
$\out(X_{n})$ are asymptotically independent random variables.
\end{definition}

\begin{figure}
\newcommand{\Bold}[1]{\mathbf{#1}}
\begin{tikzpicture}[auto, initial text=, >=latex, accepting text=, accepting/.style=accepting by arrow, accepting distance=5ex, every state/.style={minimum
    size=1.3em}]
\node[state, initial] (v0) at (0.000000,
0.000000) {};
\path[->] (v0.270.00) edge node[rotate=450.00, anchor=south] {$0$} ++(270.00:5ex);
\node[state] (v1) at (3.000000, 0.000000) {};
\path[->] (v1.270.00) edge node[rotate=450.00, anchor=south] {$0$} ++(270.00:5ex);
\node[state] (v2) at (6.000000, 0.000000) {};
\path[->] (v2.270.00) edge node[rotate=450.00, anchor=south] {$1$} ++(270.00:5ex);
\path[->] (v0.10.00) edge node[rotate=0.00, anchor=south] {$1\mid 1$} (v1.170.00);
\path[->] (v1.10.00) edge node[rotate=0.00, anchor=south] {$1\mid 0$} (v2.170.00);
\path[->] (v0) edge[loop above] node {$0\mid 0$} ();
\path[->] (v2.190.00) edge node[rotate=360.00, anchor=north] {$0\mid 1$} (v1.350.00);
\path[->] (v2) edge[loop above] node {$1\mid 0$} ();
\path[->] (v1.190.00) edge node[rotate=360.00, anchor=north] {$0\mid 0$} (v0.350.00);
\end{tikzpicture}
\caption{Transducer to compute the Hamming weight of the non-adjacent form.}
\label{fig:NAF}
\end{figure}

\begin{example}
 In~\cite{Heuberger-Prodinger:2007:hammin-weigh}, Heuberger and Prodinger prove that the Hamming weight of the
standard binary expansion and the Hamming weight of the non-adjacent form are
asymptotically independent. Thus, the transducer in Example~\ref{ex:first} is
independent.\end{example}

\section{Main results}\label{sec:main-results}
In this section, we state the main theorems and corollaries describing
independent transducers and transducers with bounded variance. First, we investigate transducers with bounded variance. Then, we give
an algebraic description and a combinatorial characterization of
independent transducers. 

\subsection{Bounded variance and singular asymptotic vari\-ance-\-co\-vari\-ance matrix}\label{sec:var}

We give a combinatorial characterization of transducers whose
 output sum has asymptotic variance $0$. We also give a combinatorial
 description of transducers with singular asymptotic variance-covariance
 matrix. These characterizations are given in terms of cycles and closed walks
 of directed graphs.

As usual, a \emph{cycle} is a strongly connected digraph such that every
vertex has out-degree $1$. A \emph{closed walk} is an alternating sequence of vertices and edges
  $(s_{1},e_{1},s_{2},\ldots,s_{n+1}=s_{1})$ such that $e_{j}$ is an edge from
  $s_{j}$ to $s_{j+1}$.

For a function $g$ and a
walk $C$ of the underlying graph of the transducer, we define
\begin{equation*}g(C)=\sum_{e\in C}g(e)\end{equation*}
taking multiplicities into account. Here, the function $g$ is either the
constant function $\mathds 1(e)=1$, the input $\eps(e)$ or the
output $\delta(e)$ of the the transition $e$.

\begin{theorem}\label{thm:var}For a subsequential, complete, finally
  connected and finally aperiodic transducer with an arbitrary finite input alphabet $\mathcal A_{I}$, the following assertions are equivalent:
\begin{enumerate}[label=(\alph*)]
\item\label{it:var0} The asymptotic variance $v_{2}$ of the output sum is $0$.
\item \label{it:somewalks} There exists a state $s$ of the
final component and a constant $k\in\R$ such that 
\begin{equation*}\delta(C)=k\mathds 1(C)\end{equation*}
holds for every closed walk $C$ of the final component
  visiting the state $s$ exactly once.
\item\label{it:allcycles} There exists a constant $k\in\R$ such that 
 \begin{equation*}\delta(C)=k\mathds 1(C)\end{equation*}
holds for every directed cycle $C$ of the final component of the transducer
$\T$.
\end{enumerate}
In that case, $kn+\bigOh(1)$ is the expected value of the output sum and
Statement~\itemref{it:somewalks} holds for all states $s$ of the final component.
\end{theorem}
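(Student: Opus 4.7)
The plan is to prove the triangle (b) $\Leftrightarrow$ (c) $\Rightarrow$ (a) $\Rightarrow$ (c), from which the moreover-clauses fall out. The first equivalence is purely combinatorial. For (c) $\Rightarrow$ (b), I decompose any closed walk $W$ of the final component into an edge-multiset of directed cycles $C_{1},\ldots,C_{r}$: since the edge-multiset of $W$ induces a flow in which every vertex has equal in- and out-degree, iteratively extracting cycles exhausts it. Applying (c) to each $C_{i}$ and summing yields $\delta(W) = k\mathds{1}(W)$, which establishes (b) not only for the distinguished state $s$ but for \emph{every} state of the final component; this handles the ``for all $s$'' part of the conclusion. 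For (b) $\Rightarrow$ (c), fix a directed cycle $C$ of the final component. If $C$ contains $s$, then $C$ itself is a closed walk visiting $s$ exactly once and (b) applies. Otherwise, use strong connectivity of the final component to choose paths $P$ from $s$ to a vertex $v \in C$ and $Q$ from $v$ back to $s$, and apply (b) to the closed walks $PCQ$ and $PC^{2}Q$; subtracting removes the contributions of $P$ and $Q$ and yields $\delta(C) = k\mathds{1}(C)$.

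For (c) $\Rightarrow$ (a) together with the asserted expected value $kn + \bigOh(1)$, I would realise $\delta - k$ as a coboundary on the final component. Fix a root $r$ there, set $\phi(r) = 0$, and for any other vertex $w$ put $\phi(w) = \sum_{e\in P}(\delta(e)-k)$ for any directed path $P$ from $r$ to $w$; this is well-defined by (c), since two such paths differ by a closed walk. It follows that $\delta(e) = k + \phi(v) - \phi(u)$ for every transition $e\colon u\to v$ of the final component. For any realisation of $X_{n}$, split the run $1 = q_{0},q_{1},\ldots,q_{n}$ at the (random) entry time $\tau$ into the final component: on the tail the outputs telescope to $k(n-\tau) + \phi(q_{n}) - \phi(q_{\tau})$, while the transient prefix contributes at most $\tau\cdot\max_{e}|\delta(e)|$. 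Since the transducer is finally connected and complete, from every transient state the probability of reaching the final component within a bounded number of steps is strictly positive, so $\tau$ is stochastically dominated by a sum of geometric random variables; both $\Expect[\tau]$ and $\Var[\tau]$ are $\bigOh(1)$. Together with the bounded terms $\phi(q_{\tau})$, $\phi(q_{n})$ and the final output $a(q_{n})$, this gives $\Expect[\out(X_{n})] = kn + \bigOh(1)$ and $\Var[\out(X_{n})] = \bigOh(1)$, whence $v_{2} = 0$.

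The converse (a) $\Rightarrow$ (c) is where I expect the main difficulty. The cleanest route is through the transfer matrix of the final component, with entries $M(y)_{i,j} = \frac{1}{|\mathcal{A}_{I}|}\sum_{\eps}\mathds{1}[(i,j,\eps)\in\mathcal{E}]\,y^{\delta(i,j,\eps)}$, and its Perron eigenvalue $\lambda(y)$ near $y=1$; by Theorem~\ref{thm:alg} and the underlying Quasi-Power analysis, $v_{2}$ is an explicit polynomial expression in $\lambda'(1)$, $\lambda''(1)$ and the derivatives of the corresponding Perron eigenvector. The crux is to argue that its vanishing forces $M(y)$ to be diagonally conjugate to $y^{k}M(1)$, which edgewise reads $\delta(e) = k + \phi(v) - \phi(u)$ and is exactly the cycle condition (c). To make this rigorous, I would use the Matrix-Tree identities of Chaiken and Moon that are invoked elsewhere in the paper to re-express $v_{2}$ as a combinatorial sum over functional digraphs or pairs of cycles in the final component, and show that this sum is non-negative and vanishes iff every cycle defect $\delta(C) - k\mathds{1}(C)$ vanishes. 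Once such a universal $k$ exists, reconstructing $\phi$ and verifying (c) proceeds exactly as in the forward direction.
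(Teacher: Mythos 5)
Your equivalence (b) $\Leftrightarrow$ (c) matches the paper's argument, and your proof of (c) $\Rightarrow$ (a) via the potential function $\phi$ (writing $\delta(e)=k+\phi(v)-\phi(u)$ on the final component so that the output telescopes, and controlling the transient prefix by the geometrically-tailed entry time $\tau$) is a correct and genuinely more elementary route than the paper, which never isolates this implication but instead proves (a) $\Leftrightarrow$ (b) in one stroke by generating functions. However, the hard direction (a) $\Rightarrow$ (b)/(c) is where your proposal has a genuine gap: you only sketch two possible strategies and the crucial positivity step is asserted, not proved. In particular, the combinatorial expression for $v_{2}$ from Theorem~\ref{thm:comb} is \emph{not} manifestly a nonnegative form in the cycle defects: it is a \emph{difference} $(\delta-e_{2}\mathds 1)(\delta-e_{2}\mathds 1)(\mathcal D_{1})-(\delta-e_{2}\mathds 1)(\delta-e_{2}\mathds 1)(\mathcal D_{2})$, where the subtracted $\mathcal D_{2}$-term is a sum of \emph{products} of defects of two disjoint cycles, with no evident sign. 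Showing that this difference vanishes only when every individual cycle defect vanishes is precisely the content you would need to supply, and nothing in your sketch does so; likewise the claim that $v_{2}=0$ forces $M(y)$ to be diagonally conjugate to $y^{k}M(1)$ is exactly the statement to be proved, not a tool.

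The missing idea, as the paper implements it, is to work not with functional digraphs but with the loop decomposition of the generating function at a fixed state $s$ of the final component: after normalizing $e_{2}=0$ one writes $A(y,z)=P^{1}(y,z)+P^{1s}(y,z)P^{s}(y,z)/(1-C^{s}(y,z))$, where $C^{s}(y,z)$ counts closed walks visiting $s$ exactly once. Singularity analysis shows that boundedness of the mean forces $C^{s}_{y}(1,1)=0$, and then the linear coefficient of the variance is a strictly positive multiple of $C^{s}_{yy}(1,1)=\sum_{C\in\mathcal C^{s}}\delta(C)^{2}K^{-\mathds 1(C)}$, which \emph{is} manifestly a sum of squares and hence vanishes iff $\delta(C)=0$ for every such closed walk, i.e.\ iff (b) holds. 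Without this (or some equivalent device that exhibits $v_{2}$ as a positive-definite form in the cycle defects), your argument establishes only (b) $\Leftrightarrow$ (c) $\Rightarrow$ (a) and leaves the converse open.
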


We want to emphasize that only cycles and closed walks of the final component
are considered in this theorem (see also Remark~\ref{rem:non-final-comp}).

In the case of a strongly connected transducer, the equivalent conditions of
Theorem~\ref{thm:var} will be shown to be equivalent to another condition
which, at first glance, seems to be even stronger.

\begin{definition}
The output sum of a transducer is called
\emph{quasi-\-deter\-min\-is\-tic} if there is a constant $k\in\R$ such that 
\begin{equation*}\out(X_{n})=kn+\bigOh(1)\end{equation*}
holds for all $n$ and all inputs. 
\end{definition}

We now characterize quasi-deterministic output sums. In
weakly connected graphs, it turns out that being ``quasi-deterministic'' is a stronger notion
than the conditions in Theorem~\ref{thm:var}.

\begin{theorem}\label{thm:quasi-det}
Let $\T$ be a subsequential, complete transducer whose underlying graph is weakly connected. Then the
following two assertions are equivalent:
\begin{enumerate}[label=(\alph*)]\setcounter{enumi}{3}
\item\label{it:quasidet} There exists a constant $k\in\R$ such that the random variable $\out(X_{n})$ is
  qua\-si-\-deter\-mi\-nis\-tic with value $kn+\bigOh(1)$.
\item\label{it:allcycles-nonfinal} There exists a constant $k\in\R$ such that 
\begin{equation*}\delta(C)=k\mathds 1(C)\end{equation*}
holds for every directed cycle $C$ of the transducer.
\end{enumerate}
\end{theorem}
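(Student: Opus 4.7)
The overall strategy is to connect both (d) and (e) through the classical decomposition of a walk in a finite digraph into a simple path together with a multiset of simple directed cycles. This reduces a global asymptotic statement about all input sequences to a purely local arithmetic statement on each cycle.

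For (e) $\Rightarrow$ (d), I would take an arbitrary input of length $n$ and let $W=(v_{0},v_{1},\ldots,v_{n})$ with $v_{0}=1$ be the induced walk. The plan is to iteratively excise closed subwalks: whenever $v_{i}=v_{j}$ with $i<j$, remove the portion from $v_{i}$ to $v_{j}$, and further split that closed walk at its own repeated interior vertices until only simple cycles remain. When the process terminates, the residual walk is simple and hence has length at most $S-1$, where $S$ is the number of states. By (e), each excised simple cycle $C$ contributes $\delta(C)=k\mathds{1}(C)$, so the total excised contribution to the output sum is exactly $k$ times its edge count. The residual simple path contributes a bounded amount, as does the final output (which is one of finitely many values). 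Summing, $\out(X_{n})=kn+\bigOh(1)$ holds \emph{uniformly in the input}, which is precisely quasi-determinism.

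For the converse, I would argue by contradiction. Suppose some directed cycle $C$ of $\T$ satisfies $\delta(C)=k'\mathds{1}(C)$ with $k'\neq k$, set $\ell:=\mathds{1}(C)$, pick a vertex $s$ on $C$, and fix a directed walk $P$ of length $\ell_{1}$ from $1$ to $s$ (available under the standard accessibility convention for transducers; weak connectivity together with completeness is what makes this convention natural here). For each large $n$, write $n-\ell_{1}=m\ell+r$ with $0\le r<\ell$, and use completeness to construct an input of length exactly $n$ whose associated walk follows $P$, traverses $C$ exactly $m$ times, and then continues for $r$ further transitions chosen arbitrarily. The output sum of this walk equals $k'm\ell+\bigOh(1)$ uniformly in $n$, whereas (d) forces it to equal $kn+\bigOh(1)=km\ell+\bigOh(1)$. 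Since $m\to\infty$ as $n\to\infty$, the two expressions can agree only if $k'=k$, a contradiction.

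The main obstacle is the technical bookkeeping in the decomposition step for (e) $\Rightarrow$ (d): one must verify carefully (most cleanly by strong induction on $|E(W)|$, at each step excising a minimal closed subwalk that is already a simple cycle) that the procedure really produces a partition of the edge multiset of $W$, and that the residual path is genuinely simple rather than merely bounded in length. A secondary subtlety is where weak connectivity is actually needed: it plays only the mild role of legitimizing the reachability of an arbitrary cycle's vertex from state $1$ in the converse direction, and does not enter the forward direction at all.
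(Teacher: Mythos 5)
Your proposal is correct and follows essentially the same route as the paper: the converse direction is the same cycle-pumping argument (prepend a path from state $1$ to the cycle, traverse the cycle many times, and compare with $kn+\bigOh(1)$), and your forward direction via excision of simple cycles is just a more explicit version of the paper's argument, which normalizes to $k=0$ and observes that since all cycles then have output sum $0$, each transition contributes at most once to $\out(z)$, giving the uniform bound $\lvert\out(z)\rvert\leq\sum_{e\in\mathcal E}\lvert\delta(e)\rvert+\max_{s}\lvert a(s)\rvert$. Your explicit flagging of the accessibility of the cycle from state $1$ is a point the paper glosses over, but otherwise the two proofs coincide.
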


By comparing statements \itemref{it:allcycles} of Theorem~\ref{thm:var}
and \itemref{it:allcycles-nonfinal} of
Theorem~\ref{thm:quasi-det}, it is obvious that in strongly connected transducers, all these statements are actually equivalent.

\begin{corollary}
Let $\T$ be a subsequential, complete, strongly connected, aperiodic
transducer. Then the asymptotic variance $v_{2}$ of the output sum is zero if
and only if the output sum is a quasi-deterministic random variable.
\end{corollary}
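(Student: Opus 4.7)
The plan is to observe that this corollary is immediate from combining Theorem~\ref{thm:var} and Theorem~\ref{thm:quasi-det}, once we notice that the hypothesis of strong connectivity identifies the ``cycle conditions'' in statements~\itemref{it:allcycles} and \itemref{it:allcycles-nonfinal} of those two theorems.

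First I would verify that a subsequential, complete, strongly connected and aperiodic transducer $\T$ satisfies the hypotheses of both preceding theorems. Strong connectivity of the underlying graph means that every state is reachable from every other state, so $\T$ is finally connected and its final component is the entire transducer; combined with aperiodicity this gives finally aperiodicity, matching the hypotheses of Theorem~\ref{thm:var}. Strong connectivity also trivially implies weak connectivity of the underlying graph, which is the hypothesis of Theorem~\ref{thm:quasi-det}. Hence both theorems are applicable to $\T$.

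Next I would match up the cycle conditions. Since the final component coincides with the whole transducer, the set of directed cycles of the final component appearing in \itemref{it:allcycles} of Theorem~\ref{thm:var} is exactly the set of directed cycles of the transducer appearing in \itemref{it:allcycles-nonfinal} of Theorem~\ref{thm:quasi-det}. Thus the two combinatorial conditions, together with the constants $k$ attached to them, are literally the same statement.

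Finally, chaining the equivalences, Theorem~\ref{thm:var} gives $v_{2}=0 \Longleftrightarrow \itemref{it:allcycles}$, and Theorem~\ref{thm:quasi-det} gives $\itemref{it:allcycles-nonfinal} \Longleftrightarrow \itemref{it:quasidet}$; in the strongly connected setting \itemref{it:allcycles} and \itemref{it:allcycles-nonfinal} coincide, so $v_{2}=0$ is equivalent to $\out(X_{n})$ being quasi-deterministic, as claimed. There is no real obstacle here; the only substantive point to state carefully is the identification of the final component with the whole transducer, which is where strong connectivity is actually used (aperiodicity is needed only to invoke Theorem~\ref{thm:var}).
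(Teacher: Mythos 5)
Your proposal is correct and follows exactly the paper's own reasoning: the paper derives this corollary by observing that statements~\itemref{it:allcycles} of Theorem~\ref{thm:var} and \itemref{it:allcycles-nonfinal} of Theorem~\ref{thm:quasi-det} coincide when the final component is the whole transducer. Your careful verification of the hypotheses and identification of the two cycle conditions is precisely the intended argument.
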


\begin{remark}\label{rem:quasi-det-weak-conn}
If the transducer is not strongly connected (so that there are states that do not belong to the final component), the output sum
can have bounded variance without being quasi-deterministic. A simple example is a transducer that counts the number of 1s in a binary string before the first $0$.
In such a case, however, the transducer formed only by the final component still needs to have quasi-deterministic output sum.
\end{remark}

When considering the special case of the Hamming weight, bounded variance only
occurs in trivial cases:

\begin{corollary}\label{cor:01output}
For $\mathcal A_{O}=\{0,1\}$, the only output weights of the final component with asymptotic variance $v_{2}=0$
are $(0,\ldots,0)$ and $(1,\ldots,1)$.
\end{corollary}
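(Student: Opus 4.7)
The plan is to combine Theorem~\ref{thm:var}\itemref{it:allcycles} with the standing aperiodicity assumption on the final component. By that theorem, $v_{2}=0$ is equivalent to the existence of a constant $k\in\R$ with $\delta(C)=k\,\mathds 1(C)$ for every directed cycle $C$ of the final component. Because $\mathcal A_{O}=\{0,1\}$, every cycle $C$ satisfies $0\le\delta(C)\le\mathds 1(C)$, so $k$ lies in $[0,1]$. Since $k=\delta(C)/\mathds 1(C)$ for any cycle of the final component (which exists because the final component is strongly connected with $N\ge 1$ states), $k$ is rational.

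Next I would write $k=p/q$ in lowest terms and exploit divisibility. From $q\,\delta(C)=p\,\mathds 1(C)$ together with $\gcd(p,q)=1$, one concludes that $q\mid\mathds 1(C)$ for every cycle $C$ in the final component. Consequently the greatest common divisor of all cycle lengths in the final component is at least $q$. Finally aperiodicity forces this gcd to equal $1$, so $q=1$, and combined with $k\in[0,1]$ this yields $k\in\{0,1\}$.

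It remains to translate the two remaining possibilities to a statement about individual transitions. If $k=0$, then every simple cycle of the final component has output sum $0$ and, since outputs are nonnegative, consists entirely of $0$-output edges; as the final component is strongly connected, each of its edges lies on a simple cycle (take the edge together with a shortest path from its head back to its tail, which is automatically simple and cannot revisit the tail), so every transition of the final component has output $0$. The case $k=1$ is completely symmetric. I do not anticipate a real obstacle here: the only nontrivial step is the divisibility argument that uses aperiodicity to exclude intermediate rational values of $k$, after which the passage from cycles to individual edges is standard.
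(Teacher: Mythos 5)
Your proposal is correct and follows essentially the same route as the paper: both reduce to Theorem~\ref{thm:var}\itemref{it:allcycles}, use $0\le\delta(C)\le\mathds 1(C)$ to get $k\in[0,1]$, and then exploit aperiodicity (gcd of cycle lengths equal to $1$) together with the integrality of $\delta(C)$ to force $k\in\{0,1\}$ --- your divisibility argument with $k=p/q$ is just the B\'ezout argument of the paper read in the other direction. Your final paragraph passing from cycles to individual edges is left implicit in the paper, so spelling it out is a harmless (indeed welcome) addition.
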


The following corollary of Theorem~\ref{thm:var} gives a combinatorial
characterization of transducers whose asymptotic variance-covariance matrix is singular.

\begin{corollary}\label{thm:reg}
Let $\T$ be a complete, subsequential, finally connected, finally aperiodic
transducer whose input alphabet has at least size $2$. Then the asymptotic
covariance-variance matrix $\Sigma$ has rank $1$ if and only if
there exist $a$, $b\in\R$ with 
\begin{equation}\label{eq:covar-rank1}\delta(C)=a\mathds 1(C)+b\eps(C)\end{equation} 
for all cycles $C$ of the final component.

In that case, the constants are $a=-\frac{c}{v_{1}}e_{1}+e_{2}$ and $b=\frac{c}{v_{1}}$.

Furthermore, the random variables $\inputsum(X_n)$ and $\out(X_n)$ are
asymptotically perfectly positively or negatively correlated (i.e., they have
asymptotic correlation coefficient $\pm 1$) if and only if~\eqref{eq:covar-rank1} holds with $b\neq 0$.
\end{corollary}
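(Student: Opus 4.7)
The plan is to reduce both directions of the equivalence to Theorem~\ref{thm:var} by passing to an auxiliary transducer whose output is a suitable linear combination of the original output and input. Since the input alphabet has at least two elements and the input letters are i.i.d.\ uniform, we have $v_1>0$; hence $\Sigma$ always has rank at least $1$, and rank exactly $1$ is equivalent to $\det(\Sigma)=v_1v_2-c^2=0$.

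For the forward direction, assume $\det(\Sigma)=0$. I would form the transducer $\T'$ with the same underlying graph as $\T$ but with output relabeled $\delta'(e)=\delta(e)-(c/v_1)\eps(e)$; all structural hypotheses of Theorem~\ref{thm:var} are preserved. Then $\out_{\T'}(X_n)=\out(X_n)-(c/v_1)\inputsum(X_n)$, and by bilinearity of covariance its asymptotic variance is
\[
v_2-2\tfrac{c}{v_1}\cdot c+\tfrac{c^2}{v_1^2}v_1=\frac{v_1v_2-c^2}{v_1}=0.
\]
Theorem~\ref{thm:var} applied to $\T'$ then yields a constant $a\in\R$ with $\delta'(C)=a\mathds 1(C)$ on every cycle $C$ of the final component, which is precisely~\eqref{eq:covar-rank1} with $b=c/v_1$. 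The theorem also identifies $an+\bigOh(1)$ as the expected value of $\out(X_n)-(c/v_1)\inputsum(X_n)$, giving $a=e_2-(c/v_1)e_1$ and pinning down the stated constants.

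For the converse, suppose~\eqref{eq:covar-rank1} holds for some $a$, $b$. Apply the same construction with $b$ in place of $c/v_1$: the transducer with output $\delta(e)-b\eps(e)$ satisfies $(\delta-b\eps)(C)=a\mathds 1(C)$ on every cycle, so Theorem~\ref{thm:var} forces its asymptotic output variance to vanish. Expanding gives $b^2v_1-2bc+v_2=0$. Because $\Sigma$ is positive semi-definite, $c^2\le v_1v_2$; on the other hand this quadratic in $b$ has real roots only if $c^2\ge v_1v_2$, so equality must hold, i.e.\ $\det(\Sigma)=0$, and $b=c/v_1$ is the unique root. Since $v_1>0$, $\Sigma$ has rank exactly $1$.

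For the correlation statement, the asymptotic correlation coefficient $c/\sqrt{v_1v_2}$ equals $\pm 1$ precisely when $\det(\Sigma)=0$ and $v_2>0$. If $b=0$, then $\delta(C)=a\mathds 1(C)$ and Theorem~\ref{thm:var} yields $v_2=0$, so the correlation is not $\pm 1$. If $b\ne 0$, I claim $v_2\ne 0$: otherwise Theorem~\ref{thm:var} would produce a constant $k$ with $\delta(C)=k\mathds 1(C)$ on every cycle, forcing $\eps(C)=((k-a)/b)\mathds 1(C)$; but applying Theorem~\ref{thm:var} once more to the transducer whose output coincides with its input would then give $v_1=0$, contradicting the i.i.d.\ hypothesis. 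This last step---excluding $v_2=0$ when $b\ne 0$---is the one genuinely delicate point, and the trick is the reuse of Theorem~\ref{thm:var} on the ``identity'' transducer that reads inputs as outputs.
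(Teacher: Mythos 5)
Your proof is correct and follows essentially the same route as the paper: both reduce to Theorem~\ref{thm:var} by passing to the auxiliary transducer with output $\delta(e)-\tfrac{c}{v_1}\eps(e)$ and translating singularity of $\Sigma$ into the vanishing of that transducer's asymptotic output variance. Your discriminant argument in the converse (using positive semi-definiteness of $\Sigma$ to force $b=c/v_1$ for an arbitrary solution $(a,b)$) and your explicit exclusion of $v_2=0$ when $b\neq 0$ fill in details the paper leaves implicit, but the underlying approach is the same.
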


\subsection{Algebraic description of independent transducers}\label{sec:alg}
For giving an algebraic description of independent
transducers, we define transition matrices of the transducer.

\begin{definition}\label{def:trans-matrix}
For $\eps\in\mathcal A_{I}$, let a \emph{transition matrix} $M_{\eps}(y)$ of the final component be the $N\times N$-matrix whose entry
$(s,t)$ is $y^{\delta}$ if there is a transition from state $s$ to state $t$
in the final component
with input $\eps$ and output $\delta$, and $0$ otherwise. 

Similarly, let $M_{\eps}'$ be the transition matrix of the whole transducer. The ordering
of the states is considered to be fixed in such a way that the initial state
$1$ is the first state and $M_{\eps}'$ has the
block structure
\begin{equation*}
\begin{pmatrix}*&*\\0&M_{\eps}\end{pmatrix}
\end{equation*}
where $*$ are matrices with arbitrary entries. If the transducer is strongly
connected, the matrices $*$ are not present (they have $0$ rows).
\end{definition}

\begin{theorem}\label{thm:alg}Let $\T$ be a complete, subsequential, finally
  connected, finally aperiodic transducer
  with transition matrices $M_{\eps}(y)$ for $\eps\in\mathcal A_{I}$. Let $K\geq 2$ be the size of
  the input alphabet $\mathcal A_{I}$ and 
\begin{equation*}f(x,y,z)=\det\Big(\mmatrix\Big).\end{equation*}

Then the random variables $\inputsum(X_n)$ and $\out(X_n)$ have the expected
values, variances and covariance
\begin{equation}\label{eq:expectation-and-variances}
\begin{aligned}
\Expect(\inputsum(X_n))&=e_{1}n,\\
\Expect(\out(X_n))&=e_{2}n+\bigOh(1),\\
\Var(\inputsum(X_n))&=v_{1}n,\\
\Var(\out(X_n))&=v_{2}n+\bigOh(1),\\
\Cov(\inputsum(X_n),\out(X_n))&=cn+\bigOh(1)
\end{aligned}
\end{equation}
with
\begin{align*}
e_{1}&=\frac{f_x}{f_z}\Big\vert_{\bfone},\\
e_{2}&=\frac{f_y}{f_z}\Big\vert_{\bfone},\\
v_{1}&=\frac1{f_{z}^{3}}(f_{x}^{2}(f_{zz}+f_{z})+f_{z}^{2}(f_{xx}+f_{x})-2f_{x}f_{z}f_{xz})\Big\vert_{\bfone},\\
v_{2}&=\frac1{f_{z}^{3}}(f_{y}^{2}(f_{zz}+f_{z})+f_{z}^{2}(f_{yy}+f_{y})-2f_{y}f_{z}f_{yz})\Big\vert_{\bfone},\\
c&=
\frac1{f_{z}^{3}}(f_{x}f_{y}(f_{zz}+f_{z})+f_{z}^{2}f_{xy}-f_{y}f_{z}f_{xz}-f_{x}f_{z}f_{yz})\Big\vert_{\bfone}
\end{align*}
where $\bfone=(1,1,1)^{t}$ and $f_{z}(\bfone)\neq 0$.

The constants $e_1$ and $v_1$ can also be expressed as
\begin{align}\label{eq:e-1-v-1-explicit-trivial}
e_{1}&=\frac 1K\sum_{\eps\in\mathcal A_{I}}\eps,&
v_{1}&=\frac 1K\sum_{\eps\in\mathcal A_{I}}\eps^{2}-\Big(\frac 1K\sum_{\eps\in\mathcal
A_{I}}\eps\Big)^{2}.
\end{align}

The random vector $\bfomega$ is
asymptotically jointly normally distributed if and only if the asymptotic
variance-covariance matrix $\Sigma$ is regular.

The transducer $\T$ is independent if and
only if
\begin{align}\label{eq:cov}(f_{x}f_{y}(f_{zz}+f_{z})+f_{z}^{2}f_{xy}-f_{y}f_{z}f_{xz}-f_{x}f_{z}f_{yz})\big\vert_{\bfone}=0
\end{align}
or, equivalently,
\begin{equation}\label{eq:simplified-exp}(e_{1}f_{y}(f_{zz}+f_{z})+f_{z}f_{xy}-f_{y}f_{xz}-e_{1}f_{z}f_{yz})\big\vert_{\bfone}=0.\end{equation}
\end{theorem}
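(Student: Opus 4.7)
My approach is to encode the joint distribution of $(\inputsum(X_{n}),\out(X_{n}))$ in a bivariate generating function, identify the dominant singularity with the equation $f(x,y,z)=0$, and then apply a $2$-dimensional Quasi-Power Theorem. By determinism of $\T$, each word $w\in\mathcal A_{I}^{n}$ corresponds to a unique walk in the transducer from state~$1$, hence
\[
\sum_{w\in\mathcal A_{I}^{n}}x^{\inputsum(w)}y^{\out(w)}=\mathbf{e}_{1}^{t}\Bigl(\sum_{\eps\in\mathcal A_{I}}x^{\eps}M_{\eps}'(y)\Bigr)^{n}\mathbf{a}(y),
\]
where $\mathbf{a}(y)_{s}=y^{a(s)}$ records the final output. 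Multiplying by $z^{n}$ and summing over $n\ge 0$ gives the rational generating function $G(x,y,z)=\mathbf{e}_{1}^{t}\bigl(I-z\sum_{\eps}x^{\eps}M_{\eps}'(y)\bigr)^{-1}\mathbf{a}(y)$, and the probability generating function of $\bfomega$ is $p_{n}(x,y)=K^{-n}[z^{n}]G(x,y,z)$. The block-triangular form of $M_{\eps}'(y)$ factors the denominator of $G$, so after the substitution $z\mapsto z/K$ the relevant factor is exactly $f(x,y,z)$ from the statement.

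The next step is to analyse the singularities of $p_{n}$. Since the final component is complete, strongly connected and aperiodic, the matrix $\tfrac1K\sum_{\eps}M_{\eps}(1)$ is primitive row-stochastic with simple dominant eigenvalue~$1$; hence $f(\bfone)=0$ and $f_{z}(\bfone)\neq 0$, and the transient block contributes a factor that is nonzero at $z=1$ (its spectral radius is strictly less than~$1$). The implicit function theorem supplies an analytic root $z(x,y)$ of $f(x,y,z)=0$ with $z(1,1)=1$ in a neighbourhood of $(1,1)$; Perron--Frobenius then gives a spectral gap, so that $z(x,y)$ remains the strictly dominant singularity throughout the neighbourhood. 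Standard singularity analysis yields
\[
p_{n}(x,y)=A(x,y)\,z(x,y)^{-n}\bigl(1+\bigOh(\rho^{n})\bigr)
\]
uniformly for some $\rho<1$, which is precisely the input required by Hwang's two-dimensional Quasi-Power Theorem applied to $U(s,t)=-\log z(e^{s},e^{t})$: the mean vector of $\bfomega$ is $n\,\nabla U(0,0)+\bigOh(1)$, the variance--covariance matrix is $n\,\mathrm{Hess}\,U(0,0)+\bigOh(1)$, and $\bfomega$ is asymptotically jointly normal whenever this Hessian, which equals $\Sigma$, is regular. Routine implicit differentiation of $f(x,y,z(x,y))=0$ at $\bfone$ converts $\nabla U(0,0)$ and $\mathrm{Hess}\,U(0,0)$ into the displayed rational expressions in the partial derivatives of $f$.

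The simpler identities \eqref{eq:e-1-v-1-explicit-trivial} for $e_{1}$ and $v_{1}$ do not require any of this machinery: $\inputsum(X_{n})=\sum_{i=1}^{n}\eps_{i}$ is a sum of $n$ i.i.d.\ letters uniform on $\mathcal A_{I}$, so its mean and variance are exactly $n$ times the alphabet mean and variance. Consistency with the $f$-expressions is a short verification using that $\tfrac1K\sum_{\eps}M_{\eps}(1)$ is row-stochastic together with Jacobi's formula for the derivative of a determinant.

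For the independence statement, when $\Sigma$ is regular the components of the joint normal limit are independent if and only if their covariance vanishes, which is precisely \eqref{eq:cov}; substituting $e_{1}=f_{x}/f_{z}|_{\bfone}$ and cancelling one factor of $f_{z}$ yields the equivalent formula \eqref{eq:simplified-exp}. The main obstacle is the singular case $\det\Sigma=0$: since $|\mathcal A_{I}|\ge 2$ forces $v_{1}>0$, singularity of $\Sigma$ is equivalent to $v_{2}=0$, and Cauchy--Schwarz then already gives $c=0$ so that \eqref{eq:cov} still correctly captures independence. However, Hwang's theorem as usually stated requires a regular Hessian, so proving the actual joint limit law in this situation requires the extension of the $2$-dimensional Quasi-Power Theorem to singular Hessians announced in Section~\ref{sec:proofs-theorems}; establishing that extension, rather than any of the algebraic manipulations above, is where the real work lies.
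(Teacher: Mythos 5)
Your overall route is the same as the paper's: encode the joint distribution in the rational generating function built from the transition matrices, use the block-triangular structure to isolate the factor $f(x,y,z)$, apply Perron--Frobenius and singularity analysis to obtain a quasi-power expansion with $u(\bfs)=-\log\rho(e^{\bfs})$, and then read off moments and the limit law from a two-dimensional Quasi-Power Theorem extended to singular Hessians, with the explicit constants coming from implicit differentiation of $f$. Up to that point the proposal matches the paper's proof step for step, including the treatment of the transient block and the honest deferral of the singular-Hessian extension (which the paper supplies as Theorem~\ref{thm:quasi}).

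There is, however, a genuine error in your treatment of the singular case of the independence criterion. You assert that, because $v_{1}>0$, singularity of $\Sigma$ is \emph{equivalent} to $v_{2}=0$, and you then invoke Cauchy--Schwarz to get $c=0$. That equivalence is false: $\det\Sigma=v_{1}v_{2}-c^{2}=0$ only says that either $v_{2}=0$ or the asymptotic correlation is $\pm1$; the case $v_{2}>0$ with $c^{2}=v_{1}v_{2}\neq 0$ is perfectly possible, and it is exactly the situation characterized in Corollary~\ref{thm:reg}, where $\inputsum(X_n)$ and $\out(X_n)$ are asymptotically perfectly correlated. In that case the limit of the standardized vector is a nondegenerate linear image of the product of a normal law and a point mass, the two coordinates satisfy an asymptotic linear relation and are therefore \emph{not} independent, and indeed $c\neq 0$, so \eqref{eq:cov} fails --- consistent with the theorem, but this is precisely the case your argument skips because you believe it cannot occur. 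The paper handles it by splitting the rank-one case according to whether $v_{2}=0$ and using the structure of the limit distribution provided by Theorem~\ref{thm:quasi}. A second, much smaller omission: for the quasi-power expansion to be uniform in a neighbourhood of $\bfone$ you also need the numerator of the generating function not to vanish at the dominant pole, so that the function $v(\bfs)$ is analytic; the paper deduces this from $A(1,1,z)=(1-z)^{-1}$ and continuity, and your write-up should record the analogous step.
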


This result has been implemented as the method
\begin{center}\texttt{FiniteStateMachine.asymptotic\_moments()}\end{center} in the computer
algebra system \textsf{Sage}, cf.\ \cite{Heuberger-Kropf:trac-16145}, using the finite state machines package described in
\cite{Heuberger-Krenn-Kropf:ta:finit-state}.

\begin{remark}\label{rem:non-final-comp}
Neither the final output nor the non-final components influence the asymptotic
result because it only depends on $f(x,y,z)$ and thus on the transitions of
the final component.
\end{remark}

Now we consider the following ``inverse'' problem: Given the underlying graph and the
input digits of the transducer; how can we choose the output labels such that
the transducer is independent? 

Let $(a_{1},\ldots,a_{KN})$ be the output labels of the final component
of the transducer. We say, as usual, that a linear equation is homogeneous if
the zero vector is a solution. Then~\eqref{eq:cov} is a linear, homogeneous equation in
$a_{1},\ldots,a_{KN}$ with real coefficients. The equation is linear because the variables
$a_{i}$ only occur linearly in the exponents of $y$ and there are
only first derivatives with respect to $y$ in the covariance
condition~\eqref{eq:cov}.  Furthermore,~\eqref{eq:cov} is homogeneous because all derivatives with
respect to $y$ (and maybe other additional variables) at $(x,y,z)^{t}=\bfone$ are homogeneous. 
A solution of this linear, homogeneous equation corresponds
to an independent transducer. 

Let us first consider the situation where all outputs are equal to $1$. Then, the determinant $f(x,y,z)$ consists of monomials $x^{a}y^{b}z^{b}$ with $a\in\R$ and
$b\in\Z$. Therefore, we obtain
\begin{align*}
f_{y}\vert_{\bfone}&=f_{z}\vert_{\bfone},\\
f_{xy}\vert_{\bfone}&=f_{xz}\vert_{\bfone},\\
f_{yz}\vert_{\bfone}&=f_{zz}+f_{z}\vert_{\bfone},
\end{align*}
and it follows that~\eqref{eq:cov} and \eqref{eq:simplified-exp} are satisfied. This means that a constant output $(k,\ldots,k)$ for $k\in\mathcal A_{O}$ is always a trivial solution to these equations because~\eqref{eq:cov} is homogeneous.

But for these trivial solutions, the sum of the output is an asymptotically
degenerate random variable. Hence, we are not really interested in the independent transducers given by these
solutions.

\begin{example}\label{ex:simple}
In Figure~\ref{fig:exsimple}, we have a transducer with variable output
weights $a_{1}$, $a_{2}$, $a_{3}$ and $a_{4}$. We do not give the final output labels as they do not influence the asymptotic
result. In this example,~\eqref{eq:cov}
simplifies to
\begin{equation*}-a_{1}+a_{2}=0.\end{equation*}

\begin{figure}
\newcommand{\Bold}[1]{\mathbf{#1}}
\begin{tikzpicture}[auto, initial text=, >=latex, accepting text=,
  accepting/.style=accepting by arrow, every state/.style={minimum
    size=1.3em}]
\node[state, accepting, accepting where=below, initial, initial where=right] (v0) at (1.300000, 0.000000) {};
\node[state, accepting, accepting where=below] (v1) at (-1.300000, 0.000000) {};
\path[->] (v0.190.00) edge node[rotate=360.00, anchor=north] {$1\mid a_{3}$} (v1.350.00);
\path[->] (v1.10.00) edge node[rotate=0.00, anchor=south] {$0\mid a_{4}$} (v0.170.00);
\path[->] (v0) edge[loop above] node {$0\mid a_{1}$} ();
\path[->] (v1) edge[loop above] node {$1\mid a_{2}$} ();
\end{tikzpicture}
\caption{Transducer of Example~\ref{ex:simple}.}
\label{fig:exsimple}
\end{figure}
\end{example}

\subsection{Combinatorial characterization of independent transducers}\label{sec:comb}
We connect the derivatives of $f(x,y,z)$ with a weighted sum
of subgraphs of the underlying graph. Thus, in Theorem~\ref{thm:comb}, we can give a combinatorial
description of~\eqref{eq:cov}. 
\begin{definition}We define the following types of directed graphs as subgraphs
  of the final component of the transducer.
\begin{itemize}
\item A \emph{rooted tree} is a weakly connected digraph with one vertex which has out-degree
$0$, while all other vertices have out-degree $1$. The vertex with out-degree $0$
is called the \emph{root} of the tree.

\item A \emph{functional digraph} is a digraph whose vertices have out-degree
$1$. Each component of a
functional digraph consists of a directed cycle and some trees rooted at
vertices of the cycle. For a functional digraph $D$, let $\mathcal C_{D}$ be the
set of all cycles of $D$.\end{itemize}
\end{definition}

\begin{definition}\label{def:sum-over-graphs}
Let $\mathcal D_{1}$ and $\mathcal D_{2}$ be the sets of all spanning
subgraphs of the final component of the
transducer $\T$ which are functional digraphs and have one and two
components, respectively.

For functions $g$ and $h\colon\mathcal E\to \R$, we define
\begin{align*}
g(\mathcal D_{1})&=\sum_{D\in\mathcal D_{1}}\sum_{C\in\mathcal
  C_{D}}g(C),\\
gh(\mathcal D_{1})&=\sum_{D\in\mathcal D_{1}}\sum_{C\in\mathcal
C_{D}}g(C)h(C),\\
gh(\mathcal D_{2})&=\sum_{D\in\mathcal D_{2}}\sum_{C_{1}\in\mathcal
C_{D}}\sum_{\substack{C_{2}\in\mathcal C_{D}\\C_{2}\neq
  C_{1}}}g(C_{1})h(C_{2}).
\end{align*}
\end{definition}

With these definitions, we give a combinatorial characterization of independent transducers.

\begin{theorem}\label{thm:comb}Let $\T$ be a complete, subsequential, finally
  connected, finally aperiodic transducer. 
  
  Then the random variables $\inputsum(X_n)$ and $\out(X_n)$ have the expected
  values given by \eqref{eq:expectation-and-variances}, where the constants are
\begin{align*}
e_{1}&=\frac{\eps(\mathcal D_1)}{\mathds 1(\mathcal D_1)},\\
e_{2}&=\frac{\delta(\mathcal D_1)}{\mathds 1(\mathcal
  D_1)}.
\end{align*}
The
variances and the covariance are given by
\eqref{eq:expectation-and-variances}, with the constants
\begin{align*}
v_{1}&=\frac{1}{\mathds 1(\mathcal D_{1})}\big((\eps-e_{1}\mathds
1)(\eps-e_{1}\mathds 1)(\mathcal D_{1})-(\eps-e_{1}\mathds 1)(\eps-e_{1}\mathds 1)(\mathcal D_{2})\big),\\
v_{2}&=\frac{1}{\mathds 1(\mathcal D_{1})}\big((\delta-e_{2}\mathds
1)(\delta-e_{2}\mathds 1)(\mathcal D_{1})-(\delta-e_{2}\mathds 1)(\delta-e_{2}\mathds 1)(\mathcal D_{2})\big),\\
c&=\frac{1}{\mathds 1(\mathcal D_{1})}\big((\eps-e_{1}\mathds
1)(\delta-e_{2}\mathds 1)(\mathcal D_{1})-(\eps-e_{1}\mathds 1)(\delta-e_{2}\mathds 1)(\mathcal D_{2})\big).
\end{align*}
  
  The transducer $\T$ is independent if and only if
\begin{align}\label{eq:comb}
(\eps-e_{1}\mathds
1)(\delta-e_{2}\mathds 1)(\mathcal D_{1})=(\eps-e_{1}\mathds 1)(\delta-e_{2}\mathds 1)(\mathcal D_{2})
\end{align}
\end{theorem}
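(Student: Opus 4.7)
The plan is to build on Theorem~\ref{thm:alg}, which gives algebraic formulas for $e_1, e_2, v_1, v_2, c$ in terms of partial derivatives of $f(x,y,z)$ at $\mathbf{1}$, and to translate each such derivative into a combinatorial sum over $\mathcal{D}_1$ and $\mathcal{D}_2$ by means of the generalized Matrix--Tree Theorem of Chaiken and Moon. First, I would expand $f(x,y,z)$ via the cycle-cover formula for $\det(I - A)$, writing it as a signed sum over vertex-disjoint cycle covers of subsets of the state set of the final component, in which each edge contributes $\frac{z}{K} x^{\eps(e)} y^{\delta(e)}$. At $(x,y,z) = \mathbf{1}$, the matrix $I - \frac{1}{K}\sum_{\eps} M_{\eps}(1)$ is Laplacian-like (its rows sum to zero on the finally connected, finally aperiodic component), so $f(\mathbf{1}) = 0$ and the interesting data sit in the derivatives.

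Next, I would interpret each first- and second-order partial of $f$ at $\mathbf{1}$ as a weighted count of spanning functional digraphs. Applying the Chaiken/Moon all-minors matrix-tree theorem to the cofactors of this Laplacian, differentiating once in $z$ distinguishes one edge lying on a cycle, and the remaining cofactor counts the ``tree-part'' of a spanning functional digraph rooted at the head of that edge; summing over the distinguished edge reassembles a spanning functional digraph in $\mathcal{D}_1$. Consequently $f_z|_{\mathbf{1}}$, $f_x|_{\mathbf{1}}$ and $f_y|_{\mathbf{1}}$ are proportional (with a common factor that is a power of $K$) to $\mathds{1}(\mathcal{D}_1)$, $\eps(\mathcal{D}_1)$ and $\delta(\mathcal{D}_1)$, respectively. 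For the second derivatives, two edges are distinguished, producing spanning functional digraphs in which either the two marked edges lie on the same cycle (giving $\mathcal{D}_1$-contributions) or on distinct cycles (giving $\mathcal{D}_2$-contributions). The correction combinations $f_{xx}+f_x$, $f_{yy}+f_y$, $f_{zz}+f_z$ appearing in Theorem~\ref{thm:alg} are precisely tuned to cancel the same-cycle pieces, leaving clean $\mathcal{D}_2$-sums.

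Substituting these identifications into Theorem~\ref{thm:alg} and clearing the common power of $K$ gives the formulas for $e_1$ and $e_2$ immediately, while for the variances and covariance one uses bilinearity (as in Definition~\ref{def:sum-over-graphs}) to expand
\[ (\eps - e_1 \mathds{1})(\delta - e_2 \mathds{1})(\mathcal{D}_i) = \eps\delta(\mathcal{D}_i) - e_1 \delta(\mathcal{D}_i) - e_2 \eps(\mathcal{D}_i) + e_1 e_2 \mathds{1}(\mathcal{D}_i) \]
(and similarly with $\eps,\delta$ replaced by $\eps,\eps$ or $\delta,\delta$) and matches the resulting combinations of derivatives to the algebraic expressions of Theorem~\ref{thm:alg}. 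The independence criterion \eqref{eq:comb} then follows directly from the condition $c = 0$ in Theorem~\ref{thm:alg} combined with the combinatorial expression for $c$.

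The main obstacle is the combinatorial analysis of the second derivatives: the cycle-cover expansion of $\det(I - zB(x,y))$ at $z = 1$ produces contributions from cycle covers on \emph{every} subset of vertices, and one has to verify carefully that the non-spanning contributions combine, via the Laplacian identity $f(\mathbf{1}) = 0$ and its minor-level analogues supplied by Chaiken/Moon, into only the spanning one- and two-cycle functional digraphs. Keeping track of signs, of multiplicities, and of the requirement that the two cycles in $\mathcal{D}_2$ be distinct (so that the ``same-cycle'' contributions must be absorbed separately into $\mathcal{D}_1$-terms) are the delicate technical points; once this combinatorial interpretation of the derivatives is in place, the rest of the proof is direct algebraic substitution.
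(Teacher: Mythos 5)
Your overall strategy is exactly the paper's: Theorem~\ref{thm:comb} is deduced from Theorem~\ref{thm:alg} together with a lemma (the paper's Lemma~\ref{lem:deriv}) that converts every first and second partial derivative of $f$ at $\bfone$ into weighted sums over $\mathcal D_1$ and $\mathcal D_2$, by Laplace expansion along one or two differentiated rows followed by the Chaiken--Moon All-Minors-Matrix-Tree Theorem, after which the one or two distinguished edges are reattached to the forests to form spanning functional digraphs with one or two cycles. So the route is the same; however, one step of your sketch, as stated, would fail to reproduce the theorem.

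You claim that the combinations $f_{xx}+f_x$, $f_{yy}+f_y$, $f_{zz}+f_z$ are ``tuned to cancel the same-cycle pieces, leaving clean $\mathcal D_2$-sums.'' They are not: Lemma~\ref{lem:deriv} gives, e.g., $f_{xx}(1,1,1)+f_x(1,1,1)=K^{-N}\bigl(\eps\eps(\mathcal D_2)-\eps\eps(\mathcal D_1)\bigr)$, and the surviving $\mathcal D_1$ term \emph{is} the same-cycle contribution (the diagonal piece $\sum_{e\in C}\eps(e)^2$ and the off-diagonal same-cycle piece $\sum_{d\neq e\in C}\eps(d)\eps(e)$ reassemble into $\eps(C)^2$). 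The only role of adding $f_x$ is to repair the falling factorial $\eps(\eps-1)$ produced by $\frac{\partial^2}{\partial x^2}x^{\eps}\big\vert_{x=1}$; the mixed derivative $f_{xy}$ needs no such correction and still equals $K^{-N}(\eps\delta(\mathcal D_2)-\eps\delta(\mathcal D_1))$. These same-cycle $\mathcal D_1$ terms are precisely what become the $(\cdot)(\cdot)(\mathcal D_1)$ summands in the stated formulas for $v_1$, $v_2$ and $c$, so discarding them loses half of each formula. Relatedly, your bilinear expansion should read $(\eps-e_1\mathds 1)(\delta-e_2\mathds 1)(\mathcal D_i)=\eps\delta(\mathcal D_i)-e_1\,\mathds 1\delta(\mathcal D_i)-e_2\,\eps\mathds 1(\mathcal D_i)+e_1e_2\,\mathds 1\mathds 1(\mathcal D_i)$: by Definition~\ref{def:sum-over-graphs} the cross terms are sums of \emph{products} $g(C)h(C)$ (or $g(C_1)h(C_2)$ for $\mathcal D_2$), not the plain sums $\delta(\mathcal D_i)$, $\eps(\mathcal D_i)$, $\mathds 1(\mathcal D_i)$, which for $\mathcal D_2$ are not even defined. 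With these two corrections the substitution into Theorem~\ref{thm:alg} goes through as in the paper.
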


We emphasize that, by Definition~\ref{def:sum-over-graphs}, only edges in the
final component of the transducer are considered in
Theorem~\ref{thm:comb}. The non-final components do not influence the
asymptotic main terms (see also Remark~\ref{rem:non-final-comp}).

In the following corollary, we consider the case of a normalized input and
output, i.e., the constants of the expected values satisfy
$e_{1}=e_{2}=0$. This can be obtained by subtracting the original constants
$e_{1}$ and $e_{2}$ from every input label and output label,
respectively. Then the
corollary follows directly from Theorem~\ref{thm:comb}.

\begin{corollary}\label{cor:comb-indep}
Suppose that $\Expect (\inputsum(X_{n}))$ and $\Expect(\out(X_{n}))$ are both bounded. Then the transducer $\T$ is independent if and only if
\begin{equation*}\eps\delta(\mathcal D_{2})=\eps\delta(\mathcal D_{1}).\end{equation*}
\end{corollary}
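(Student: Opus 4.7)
The plan is to derive Corollary~\ref{cor:comb-indep} as a direct specialization of Theorem~\ref{thm:comb}, with the boundedness hypotheses forcing $e_1=e_2=0$. My first step is to translate the two expectation assumptions into algebraic conditions on the constants $e_1$ and $e_2$. From the formulas in \eqref{eq:expectation-and-variances}, we have $\Expect(\inputsum(X_n))=e_1 n$ exactly, so boundedness of $\Expect(\inputsum(X_n))$ in $n$ is equivalent to $e_1=0$. Similarly, $\Expect(\out(X_n))=e_2 n+\bigOh(1)$, so boundedness forces $e_2=0$. Both of these are genuine equivalences rather than one-way implications, which is what we need.

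Once $e_1=e_2=0$ is established, I would plug these values into the characterization \eqref{eq:comb} from Theorem~\ref{thm:comb}. The linear combinations $\eps-e_1\mathds{1}$ and $\delta-e_2\mathds{1}$ occurring there collapse to $\eps$ and $\delta$ respectively, so the defining identity
\begin{equation*}
(\eps-e_{1}\mathds{1})(\delta-e_{2}\mathds{1})(\mathcal D_{1})=(\eps-e_{1}\mathds{1})(\delta-e_{2}\mathds{1})(\mathcal D_{2})
\end{equation*}
becomes exactly $\eps\delta(\mathcal D_1)=\eps\delta(\mathcal D_2)$, which is the asserted criterion.

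There is essentially no obstacle here: the work is a straightforward substitution using the bilinearity that is already built into Definition~\ref{def:sum-over-graphs}, namely that $g(C)$ depends linearly on $g$. The only point one should mention explicitly is that both directions of Corollary~\ref{cor:comb-indep} follow in tandem from Theorem~\ref{thm:comb}, since the hypothesis $e_1=e_2=0$ is imposed before invoking the "if and only if" statement, so no additional argument is required to transfer the equivalence.
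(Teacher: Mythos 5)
Your proposal is correct and matches the paper's own (very brief) argument: the paper likewise observes that the boundedness hypotheses force $e_1=e_2=0$ and that the corollary then follows directly by substituting into the criterion \eqref{eq:comb} of Theorem~\ref{thm:comb}. No gaps; your explicit remark that both directions transfer at once is exactly the right level of care.
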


\begin{example}\label{ex:simple2}
We again consider the transducer of Example~\ref{ex:simple} in
Figure~\ref{fig:exsimple}. The set $\mathcal D_{1}$ consists of $3$ functional
digraphs and $\mathcal D_{2}$ consists of only one functional digraph (see
Figure~\ref{fig:funcdigraph}). By~\eqref{eq:comb}, we obtain the same
equation as before, namely
\begin{equation*}a_{1}-a_{2}=0,\end{equation*}
as condition for the transducer to be independent.

Also by Theorem~\ref{thm:comb}, the expected value of the output sum is
\begin{equation*}\frac{a_{1}+a_{2}+a_{3}+a_{4}}4n+\bigOh(1)\end{equation*}
and the asymptotic  variance is
\begin{equation*}
  \frac{5a_1^2 - 6a_1a_2 + 5a_2^2 - 2a_1a_3 - 2a_2a_3 +
a_3^2 - 2a_1a_4 - 2a_2a_4 + 2a_3a_4 + a_4^2}{16}.
\end{equation*}
The covariance between the input sum and the output sum is
\begin{equation*}
  -\frac{a_1 - a_2}{4}n +\bigOh(1).
\end{equation*}
\begin{figure}
\centering
\begin{subfigure}[b]{\textwidth}\centering
\newcommand{\Bold}[1]{\mathbf{#1}}
\begin{tikzpicture}[auto, initial text=, >=latex, accepting text=,
  accepting/.style=accepting by arrow, every state/.style={minimum
    size=1.3em}]
\useasboundingbox (-1.65, -0.65) rectangle (1.65, 1.5);
\node[state] (v0) at (1.200000, 0.000000) {};
\node[state] (v1) at (-1.200000, 0.000000) {};
\path[->, line width=0.8pt] (v0.190.00) edge node[rotate=360.00, anchor=north] {$1\mid a_{3}$} (v1.350.00);
\path[->, color=gray] (v1.10.00) edge node[rotate=0.00, anchor=south] {$\ $} (v0.170.00);
\path[->, color=gray] (v0) edge[loop above] node {} ();
\path[->, line width=0.8pt] (v1) edge[loop above] node {$1\mid a_{2}$} ();
\end{tikzpicture}\quad
\begin{tikzpicture}[auto, initial text=, >=latex, accepting text=,
  accepting/.style=accepting by arrow, every state/.style={minimum
    size=1.3em}]
\useasboundingbox (-1.65, -0.65) rectangle (1.65, 1.5);
\node[state] (v0) at (1.200000, 0.000000) {};
\node[state] (v1) at (-1.200000, 0.000000) {};
\path[->, line width=0.8pt] (v0.190.00) edge node[rotate=360.00, anchor=north] {$1\mid a_{3}$} (v1.350.00);
\path[->, line width=0.8pt] (v1.10.00) edge node[rotate=0.00, anchor=south] {$0\mid a_{4}$} (v0.170.00);
\path[->, color=gray] (v0) edge[loop above] node {} ();
\path[->, color=gray] (v1) edge[loop above] node {} ();
\end{tikzpicture}\quad
\begin{tikzpicture}[auto, initial text=, >=latex, accepting text=,
  accepting/.style=accepting by arrow, every state/.style={minimum
    size=1.3em}]
\useasboundingbox (-1.65, -0.65) rectangle (1.65, 1.5);
\node[state] (v0) at (1.200000, 0.000000) {};
\node[state] (v1) at (-1.200000, 0.000000) {};
\path[->, color=gray] (v0.190.00) edge node[rotate=360.00, anchor=north] {} (v1.350.00);
\path[->, line width=0.8pt] (v1.10.00) edge node[rotate=0.00, anchor=south] {$0\mid a_{4}$} (v0.170.00);
\path[->, line width=0.8pt] (v0) edge[loop above] node {$0\mid a_{1}$} ();
\path[->, color=gray] (v1) edge[loop above] node {} ();
\end{tikzpicture}
\caption{$\mathcal D_{1}$}
\end{subfigure}\\
\begin{subfigure}[b]{\textwidth}\centering
\newcommand{\Bold}[1]{\mathbf{#1}}\begin{tikzpicture}[auto, initial text=, >=latex, accepting text=, accepting/.style=accepting by arrow, every state/.style={minimum size=1.3em}]
\node[state] (v0) at (1.200000, 0.000000) {};
\node[state] (v1) at (-1.200000, 0.000000) {};
\path[->, color=gray] (v0.190.00) edge node[rotate=360.00, anchor=north] {} (v1.350.00);
\path[->, color=gray] (v1.10.00) edge node[rotate=0.00, anchor=south] {} (v0.170.00);
\path[->, line width=0.8pt] (v0) edge[loop above] node {$0\mid a_{1}$} ();
\path[->, line width=0.8pt] (v1) edge[loop above] node {$1\mid a_{2}$} ();
\end{tikzpicture}
\caption{$\mathcal D_{2}$}
\end{subfigure}
\caption{Functional digraphs of the transducer of Example~\ref{ex:simple2}.}
\label{fig:funcdigraph}
\end{figure}
\end{example}

\section{Examples of transducers}\label{sec:ex}
In this section we give various examples to illustrate our theorems: these include both dependent and independent transducers and transducers with both bounded and unbounded
variance of the output sum. These examples are also shown in the
documentation of the method \texttt{FiniteStateMachine.asymptotic\_moments()}
\cite{Heuberger-Kropf:trac-16145} in \textsf{Sage}. Example~\ref{ex:tau-adic}
demonstrates how the combinatorial characterization of transducers with
bounded variance can be  used in cases where we only have limited information
about the transducer.

\begin{figure}
\newcommand{\Bold}[1]{\mathbf{#1}}\begin{tikzpicture}[auto, initial text=, >=latex, accepting text=, accepting/.style=accepting by arrow]
\node[state, initial] (v0) at (0.000000,
0.000000) {$1$};
\path[->] (v0.90.00) edge node[rotate=270.00, anchor=north] {$0$} ++(90.00:5ex);
\node[state] (v1) at (3.000000, 0.000000) {$2$};
\path[->] (v1.270.00) edge node[rotate=450.00, anchor=south] {$0$} ++(270.00:5ex);
\node[state] (v2) at (6.000000, 0.000000) {$w+1$};
\path[->] (v2.90.00) edge node[rotate=90.00, anchor=north] {$1$} ++(90.00:5ex);
\node[state] (v3) at (3, 6) {$w$};
\path[->] (v3.180.00) edge node[rotate=0.00, anchor=south] {$0$}
++(180.00:5ex);
\node[state, minimum size=0em] (v4) at (3, 2.5) {};
\node[state, minimum size=0em] (v5) at (3, 3.5) {};
\node[state, minimum size=0em] (v6) at (3, 4.5) {};
\path[->] (v0) edge node[rotate=0.00, anchor=north] {$1\mid 1$} (v1);
\path[->] (v1) edge node[auto, align=center] {$0\mid
  0$\\$1\mid 0$} (v4);
\path[->] (v4) edge[dotted] node {} (v5);
\path[->] (v5) edge[dotted] node {} (v6);
\path[->] (v6) edge[dotted] node {} (v3);
\path[->] (v0) edge[loop below] node {$0\mid 0$} ();
\path[->] (v2) edge node[rotate=360.00, anchor=north] {$0\mid 1$} (v1);
\path[->] (v2) edge[loop below] node {$1\mid 0$} ();
\path[->] (v3) edge node[rotate=63.43, anchor=south] {$0\mid 0$} (v0);
\path[->] (v3) edge node[rotate=-63.43, anchor=south] {$1\mid 0$} (v2);
\end{tikzpicture}
\caption{Transducer to compute the Hamming weight of the width-$w$ non-adjacent form.}
\label{fig:wNAF}
\end{figure}

\begin{example}[Width-$w$ non-adjacent form]\label{ex:wNAF}
The width-$w$ non-ad\-jacent form (cf.~\cite{avanzi:mywnaf,muirstinson:minimality}) is a digit expansion with base
$2$, digits $\{0,\allowbreak \pm1,\allowbreak \pm3,\allowbreak \ldots,\allowbreak \pm(2^{w-1}-1)\}$ and the syntactical rule
that at most one of any $w$ consecutive digits is nonzero. The transducer in
Figure~\ref{fig:wNAF} computes the Hamming weight of the width-$w$
non-adjacent form when reading the standard binary expansion
(cf.~\cite{Heuberger-Kropf:2013:analy}). For $w=2$, this transducer is the
same as that in Figure~\ref{fig:NAF}. The variance of the output is not $0$ (Corollary~\ref{cor:01output}). With Theorem~\ref{thm:alg} or~\ref{thm:comb}, we obtain
that this transducer is independent for every $w$. Thus, the Hamming weight of the width-$w$
non-adjacent form and the standard binary expansion are asymptotically
independent. 
\end{example}

\begin{remark}Example~\ref{ex:wNAF} not only shows that there are infinitely
  many independent transducers, but also gives the construction of one such infinite
  family of independent transducers.
\end{remark}

\begin{example}[Gray code]
The Gray code is an encoding of the positive integers such that the Gray code
of $n$ and the Gray code of $n+1$ differ only at one position. The transducer in Figure~\ref{fig:gray} computes the Gray
code of an integer. The output label of the initial state is $0$ and, as it
does not influence the result, it is not given in the figure. The transducer is finally connected and finally aperiodic. The final component consisting of states $2$ and $3$ is
independent (see Example~\ref{ex:simple}). Thus, the Hamming weight of the
Gray code and the standard binary expansion are asymptotically independent.
\begin{figure}
\newcommand{\Bold}[1]{\mathbf{#1}}\begin{tikzpicture}[auto, initial text=, >=latex, accepting text=, accepting/.style=accepting by arrow, accepting distance=5ex]
\node[state, initial] (v0) at (0.000000, 0.000000) {$1$};
\node[state] (v1) at (3.000000, 2.000000) {$2$};
\path[->] (v1.0.00) edge node[rotate=0.00, anchor=south] {$0$} ++(0.00:5ex);
\node[state] (v2) at (3.000000, -2.000000) {$3$};
\path[->] (v2.0.00) edge node[rotate=0.00, anchor=south] {$1$} ++(0.00:5ex);
\path[->] (v0) edge node[rotate=33.69, anchor=south] {$0\mid -$} (v1);
\path[->] (v1.-85.00) edge node[rotate=90.00, anchor=north] {$1\mid 1$} (v2.85.00);
\path[->] (v2.95.00) edge node[rotate=90.00, anchor=south] {$0\mid 1$} (v1.265.00);
\path[->] (v1) edge[loop above] node {$0\mid 0$} ();
\path[->] (v2) edge[loop below] node {$1\mid 0$} ();
\path[->] (v0) edge node[rotate=-33.69, anchor=north] {$1\mid -$} (v2);
\end{tikzpicture}
\caption{Transducer to compute the Gray code.}
\label{fig:gray}
\end{figure}
\end{example}

\begin{example}[Length $2$ blocks in the standard binary expansion] 
We count the number of patterns of length $2$ occurring in the standard binary
expansion and compare it to the Hamming weight. By symmetry, it is obviously sufficient
to consider the two patterns $01$ and $11$. The transducers in Figure~\ref{fig:11}
determine the number of $01$- and $11$-blocks, respectively. The variance of the
output weight is not $0$ in either case (Corollary~\ref{cor:01output}), in fact the
constant $v_2$ is $\frac{1}{16}$ (for $01$-blocks) and $\frac{5}{16}$ respectively.

By Theorem~\ref{thm:alg} or~\ref{thm:comb}, we also find that the transducer for
$01$-blocks is independent, while the transducer for $11$-blocks (unsurprisingly) is not:
the number of $11$-blocks asymptotically depends on the number of $1$'s in the
standard binary expansion, and the correlation coefficient is $\frac2{\sqrt{5}}\approx 0.894$.
\begin{figure}
\centering
\begin{subfigure}[b]{0.45\textwidth}\centering
\newcommand{\Bold}[1]{\mathbf{#1}}\begin{tikzpicture}[auto, initial text=,
  >=latex, accepting text=, accepting/.style=accepting by arrow,  every state/.style={minimum size=1.3em}]
\node[state, initial, initial where=right] (v0) at (1.300000, 0.000000) {};
\path[->] (v0.270.00) edge node[rotate=-90.00, anchor=south] {$0$} ++(270.00:5ex);
\node[state] (v1) at (-1.300000, 0.000000) {};
\path[->] (v1.270.00) edge node[rotate=-90.00, anchor=south] {$0$} ++(270.00:5ex);
\path[->] (v1.10.00) edge node[rotate=0.00, anchor=south] {$0\mid 1$} (v0.170.00);
\path[->] (v0) edge[loop above] node {$0\mid 0$} ();
\path[->] (v0.190.00) edge node[rotate=360.00, anchor=north] {$1\mid 0$} (v1.350.00);
\path[->] (v1) edge[loop above] node {$1\mid 0$} ();
\end{tikzpicture}
\caption{$01$-blocks}
\end{subfigure}\quad
\begin{subfigure}[b]{0.45\textwidth}\centering

\newcommand{\Bold}[1]{\mathbf{#1}}\begin{tikzpicture}[auto, initial text=,
  >=latex, accepting text=, accepting/.style=accepting by arrow, every state/.style={minimum size=1.3em}]
\node[state, initial, initial where=right] (v0) at (1.300000, 0.000000) {};
\path[->] (v0.270.00) edge node[rotate=-90.00, anchor=south] {$0$} ++(270.00:5ex);
\node[state] (v1) at (-1.300000, 0.000000) {};
\path[->] (v1.270.00) edge node[rotate=-90.00, anchor=south] {$0$} ++(270.00:5ex);
\path[->] (v1.10.00) edge node[rotate=0.00, anchor=south] {$0\mid 0$} (v0.170.00);
\path[->] (v0) edge[loop above] node {$0\mid 0$} ();
\path[->] (v0.190.00) edge node[rotate=360.00, anchor=north] {$1\mid 0$} (v1.350.00);
\path[->] (v1) edge[loop above] node {$1\mid 1$} ();
\end{tikzpicture}
\caption{$11$-blocks}
\end{subfigure}
\caption{Transducers to count the number of $01$- and $11$-blocks in the standard binary expansion.}
\label{fig:11}
\end{figure}
\end{example}

\begin{example}
Now, we give an example of a transducer with bounded variance of the output sum. We compute
the number of $10$-blocks minus the number of $01$-blocks in the standard
binary digit expansion. In Figure~\ref{fig:0110blocks}, we show the
corresponding transducer. The output label of the initial state is $0$ and, as it
does not influence the result, it is not given in the figure. Any of the three cycles has output sum $0$. Thus, the
asymptotic variance of this random variable is $0$. There is, of course, an intuitive explanation: when we read
a $1$ after a $0$ (reading from right to left), the count increases by $1$; when we read a $0$ after a $1$, the
count decreases by $1$; otherwise, it remains unchanged. Thus the final output value will only depend on the
first and last digit.

\begin{figure}
\newcommand{\Bold}[1]{\mathbf{#1}}\begin{tikzpicture}[auto, initial text=,
  >=latex, accepting text=, accepting/.style=accepting by arrow, every state/.style={minimum size=1.3em}]
\node[state] (v0) at (3.000000, -2.000000) {};
\node[state] (v1) at (3.000000, 2.000000) {};
\node[state, initial] (v2) at (0.000000, 0.000000) {};
\path[->] (v0.0.00) edge node[rotate=0.00, anchor=south] {$0$} ++(0.00:5ex);
\path[->] (v1.0.00) edge node[rotate=0.00, anchor=south] {$0$} ++(0.00:5ex);
\path[->] (v0.100.00) edge node[rotate=90.00, anchor=south] {$0\mid -1$} (v1.260.00);
\path[->] (v0) edge[loop below] node {$1\mid 0$} ();
\path[->] (v2) edge node[rotate=33.69, anchor=south] {$0\mid 0$} (v1);
\path[->] (v1) edge[loop above] node {$0\mid 0$} ();
\path[->] (v2) edge node[rotate=-33.69, anchor=south] {$1\mid 0$} (v0);
\path[->] (v1.-80.00) edge node[rotate=90.00, anchor=north] {$1\mid 1$} (v0.80.00);
\end{tikzpicture}
\caption{Transducer to compute the number of $10$-blocks minus the number of $01$-blocks in the standard binary expansion.}
\label{fig:0110blocks}
\end{figure}
\end{example}

\begin{example}\label{ex:tau-adic}
Finally, we consider the transducer used in
\cite{Heigl-Heuberger:2012:analy-digit} to compute the minimal Hamming weight of the
$\tau$-adic digit expansion for a given algebraic integer $\tau$ and a given digit set $\mathcal
D$. Note that the output alphabet of the transducer need not be $\{0,1\}$ even if we are
interested in the Hamming weight. The next theorem is an
extension of Theorem~4 in~\cite{Heigl-Heuberger:2012:analy-digit}.
\begin{theorem}Assume that
  $\mathcal D\subset\Z[\tau]^{d}$, for $d$ a positive integer, and $\mathcal
  D\cap\tau\Z^{d}=\{0\}$. Let $\mw(z)$ be the minimal Hamming weight of a
  $\tau$-adic joint digit representation of $z$ with digits in $\mathcal D$. Assume
  further that
  the digit set $\mathcal D$ satisfies
\begin{equation*}\forall c\in\Z[\tau]^{d}\quad\exists U\in\mathbb R\quad\forall z\in\Z[\tau]^{d}:\,
  |\mw(z+c)-\mw(z)|\leq U.
\end{equation*}
 Consider the random variable $W_{n}=\mw(D_{n})$, where $D_{n}$ is a random
 $\tau$-adic joint digit representation of length $n$ with digits in
 $\mathcal A_{I}\subset\Z[\tau]^{d}$. We assume that
 $(\tau,\mathcal A_{I})$ is an irredundant digit system with $0\in\mathcal A_{I}$. The digits of $D_{n}$ are independent and identically
 distributed with uniform distribution on $\mathcal A_{I}$. 

Then there exist constants $E$, $V$, with $V\neq0$, such that
\begin{align*}\Expect W_{n}&=En+\bigOh(1),\\
\Var W_{n}&=Vn+\bigOh(1)
\end{align*}
and 
\begin{equation*}\frac{W_{n}-En}{\sqrt{Vn}}\end{equation*}
 is asymptotically normally distributed.
\end{theorem}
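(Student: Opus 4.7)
The plan is to leverage the transducer $\T$ constructed in \cite{Heigl-Heuberger:2012:analy-digit} whose output sum, on input the random representation $D_{n}$, equals $\mw(D_{n})$. As a first step I would verify that under the hypotheses of the theorem this $\T$ is subsequential, complete, finally connected and finally aperiodic; most of this is carried out in the cited work, and the remaining properties follow from the irredundancy of $(\tau,\mathcal A_{I})$ together with the Lipschitz-type hypothesis on $\mw$. Once this is in place, Theorem~\ref{thm:alg} immediately provides constants $E:=e_{2}$ and $V:=v_{2}$ satisfying $\Expect W_{n}=En+\bigOh(1)$ and $\Var W_{n}=Vn+\bigOh(1)$, and Theorem~4 of \cite{Heigl-Heuberger:2012:analy-digit} supplies the strict positivity $E>0$.

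The heart of the argument, and the main obstacle, is establishing $V\neq 0$; I would do this by contradiction via Theorem~\ref{thm:var}. Assume $V=0$. Then there is $k\in\R$ with $\delta(C)=k\mathds 1(C)$ for every directed cycle $C$ of the final component, and this same $k$ satisfies $\Expect W_{n}=kn+\bigOh(1)$. Since $0\in\mathcal A_{I}$ and $0\in\mathcal D$, the ``zero-carry'' state of $\T$ carries a self-loop on input $0$ of output weight $0$, and this loop sits inside the final component (a consequence of completeness together with irredundancy: feeding a sufficiently long string of $0$s brings every state into the zero-carry state, so the latter belongs to the unique sink strongly connected component). Evaluating the cycle condition on this self-loop forces $k=0$, whence $\Expect W_{n}=\bigOh(1)$, contradicting the linear growth $En+\bigOh(1)$ with $E>0$. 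The condition $\mathcal D\cap\tau\Z^{d}=\{0\}$ is what ultimately makes $E$ strictly positive, since every input value that is nonzero modulo $\tau$ requires a nonzero digit from $\mathcal D$ and therefore contributes a positive average to the minimal Hamming weight per input digit.

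With $V\neq 0$ in hand, the asymptotic normality of $(W_{n}-En)/\sqrt{Vn}$ follows from Hwang's Quasi-Power Theorem~\cite{Hwang:1998} applied to the probability generating function of $W_{n}$, which is extracted from the transfer matrix $\frac{1}{K}\sum_{\eps\in\mathcal A_{I}}M_{\eps}(y)$ exactly as in the proof of Theorem~\ref{thm:alg}: the dominant eigenvalue is analytic in $y$ near $y=1$ by Perron--Frobenius and finite aperiodicity, and the variability condition of the Quasi-Power Theorem is precisely $V\neq 0$. The trickiest points I expect to spend care on are the verification that the zero self-loop really lies in the final component of the specific transducer constructed in \cite{Heigl-Heuberger:2012:analy-digit}, and the clean extraction of $E>0$ from the digit-set hypotheses --- both of which depend on the combinatorial setup of that paper rather than on any new idea here.
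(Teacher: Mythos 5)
Your overall strategy --- feed the Heigl--Heuberger transducer into Theorem~\ref{thm:var}, use the zero loop at the initial state to force $k=0$, and derive a contradiction --- matches the paper up to the very last step, but that last step has a genuine gap. You close the contradiction by invoking $E>0$, attributing it to Theorem~4 of \cite{Heigl-Heuberger:2012:analy-digit} and to the hypothesis $\mathcal D\cap\tau\Z^{d}=\{0\}$. Neither source delivers this: Theorem~4 of the cited work (as used here) supplies the expansions $\Expect W_{n}=En+\bigOh(1)$, $\Var W_{n}=Vn+\bigOh(1)$ and the CLT \emph{conditional} on $V\neq 0$, but not positivity of $E$ (indeed the theorem being proved does not even assert $E\neq 0$); and the digit-set condition only shows that $\mw(z)\geq 1$ whenever $z\not\equiv 0 \pmod{\tau}$, which yields $\Expect W_{n}\geq 1-o(1)$, not linear growth. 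Proving $E>0$ rigorously essentially amounts to exhibiting a cycle of the final component with positive output sum (since $E=\delta(\mathcal D_{1})/\mathds 1(\mathcal D_{1})$, and note that the outputs of this transducer need not lie in $\{0,1\}$, nor obviously be nonnegative), so the missing ingredient is precisely the combinatorial fact your contradiction is supposed to produce.

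The paper sidesteps $E>0$ entirely. After forcing $k=0$ via the zero loop, it takes any $z$ with $\mw(z)\neq 0$ (any nonzero $z$ works), runs its $\mathcal A_{I}$-representation from the initial state, and appends $0^{l}$ to return to the initial state; the resulting closed walk through state $1$ has output sum $\mw(z\tau^{l})=\mw(z)\neq 0$, contradicting statement \itemref{it:somewalks} of Theorem~\ref{thm:var} with $k=0$. Only a single closed walk with \emph{nonzero} output sum is needed, not a positive average. To repair your write-up, either supply an actual proof that $E>0$ or replace the final step with this explicit closed-walk construction.
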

\begin{proof}In~\cite{Heigl-Heuberger:2012:analy-digit}, the authors give a
  strongly connected and aperiodic
  transducer computing $\mw(z)$ if the input is the $\tau$-adic representation
  of $z$ with digit set $\mathcal A_{I}$ read from left to right. Everything follows from Theorem 4
  in~\cite{Heigl-Heuberger:2012:analy-digit} if $V\neq0$.

  To prove $V\neq 0$, we use Theorem~\ref{thm:var},
 \itemref{it:somewalks}. In~\cite{Heigl-Heuberger:2012:analy-digit}, the authors state
  that the transducer has a loop at the initial state $1$ with input and output
  digit $0$. Thus, in Theorem~\ref{thm:var}, 
  \itemref{it:somewalks}, the value of $k$ is $0$.

On the other hand, there exists a $z\in\Z[\tau]^{d}$ with $\mw(z)\neq0$. The input
$z$ leads to a state $s$. From each
state the input $0^{l}$, for some $l$, leads again to the initial state $1$. Thus,
the unique path whose input labels are given by the digit representation of $z\tau^{l}$ is a
closed walk visiting $1$ at least once. The output sum of this closed walk is
$\mw(z\tau^{l})=\mw(z)\neq 0$. Thus, there exists a closed walk whose output
sum is not $0$, which contradicts Theorem~\ref{thm:var}, 
  \itemref{it:somewalks} with $k=0$. Therefore, we obtain $V\neq 0$.
\end{proof}
\end{example}

\section{Proofs of the theorems}\label{sec:proofs-theorems}
In this section, we give the proofs of the theorems and corollaries of
Section~\ref{sec:main-results}. We first prove the algebraic description and
the combinatorial characterization in Sections~\ref{sec:alg} and \ref{sec:comb}. Later we prove the statements in Section~\ref{sec:var} about the
bounded variance.

\subsection{Algebraic description of independent transducers}
First, we prove a slight extension of the $2$-dimensional
Quasi-Power Theorem~\cite{Heuberger:2007:quasi-power} (a generalization
of~\cite{Hwang:1998}). This extension will also take into account the case of
a singular Hessian matrix.

We write boldface letters for a vector $\bfs=(s_{1},s_{2})^{t}$. Furthermore, we use the notation 
$e^{\bfs}=(e^{s_{1}},e^{s_{2}})$. We denote by $\bfone$ a $2$- or
$3$-dimensional vector of ones, depending on the context. By $\|\cdot\|$, we
denote the maximum norm $\|\bfs\|=\max(\lvert s_{1}\rvert,\lvert s_{2}\rvert)$.

\begin{theorem}\label{thm:quasi}
Let $(\bfomega)_{n\geq1}$ be a sequence of $2$-dimensional real random
vectors. Suppose that the moment generating function satisfies 
\begin{equation*}\Expect(e^{\langle\bfomega,\bfs\rangle})=e^{u(\bfs)\Phi(n)+v(\bfs)}\big(1+\bigOh(\kappa_{n}^{-1})\big),\end{equation*}
the $\bigOh$-term being uniform for $\|\bfs\|\leq\tau$,
$\bfs\in\mathbb C^{2}$, $\tau>0$, where
\begin{enumerate}
\item $u(\bfs)$ and $v(\bfs)$ are analytic for $\|s\|\leq\tau$ and
  independent of $n$;
\item $\lim_{n\rightarrow\infty}\Phi(n)=\infty$;
\item $\lim_{n\rightarrow\infty}\kappa_{n}=\infty$.
\end{enumerate}

Then, 
\begin{equation}\label{eq:quasi-power-expectation-variance}
\begin{aligned}
  \Expect(\bfomega)&= \Phi(n)\grad u(\bfzero)+\grad v(\bfzero)+\bigOh(\kappa_n^{-1}),\\
  \Var(\bfomega)&=\Phi(n)H_{u}(\bfzero)+H_v(\bfzero)+\bigOh(\kappa_n^{-1}),
\end{aligned}
\end{equation}
where $H_u(\bfs)$ is the Hessian matrix of $u$. Let $\Sigma$ be the matrix $H_{u}(\bfzero)$.

If $H_{u}(\bfzero)$ is regular, then the standardized random vector 
\begin{equation*}\bfomega^{*}=\frac{\bfomega-\Phi(n)\grad u(\bfzero)}{\sqrt{\Phi(n)}}\end{equation*}
is asymptotically jointly
normally distributed with variance-covariance matrix $\Sigma$.

If $H_{u}(\bfzero)$ has rank $1$, then the limit distribution of $\bfomega^{*}$ is
the direct product of a normal distribution and a degenerate distribution (if one of the
variances is $\bigOh(1)$) or a linear transformation thereof. In the
first case, the coordinates of $\bfomega^{*}$ are asymptotically independent. In the second case,
we have an asymptotically linear relationship between the two coordinates.

If $H_{u}(\bfzero)$ has rank $0$, then the limit distribution of
$\bfomega^{*}$ is degenerate.
\end{theorem}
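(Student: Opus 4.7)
The plan is to follow the classical moment-generating function route: extract the mean and variance formulas by differentiating the hypothesized expansion at the origin, then identify the limiting distribution of $\bfomega^{*}$ by showing pointwise convergence of its MGF. The regular case is the standard $2$-dimensional Quasi-Power Theorem~\cite{Heuberger:2007:quasi-power}; what is novel here is the description of the limit when $H_u(\bfzero)$ is singular, which reduces to a geometric analysis of a degenerate bivariate Gaussian.

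For the mean and variance in~\eqref{eq:quasi-power-expectation-variance}, first I would observe that evaluating the expansion at $\bfs=\bfzero$ forces $u(\bfzero)=v(\bfzero)=0$, because the left-hand side is identically $1$ while $\Phi(n)\to\infty$. By Cauchy's formula on a polydisc $\|\bfs\|\leq\tau'<\tau$, the uniform $\bigOh(\kappa_n^{-1})$ remainder can be differentiated once or twice with the same error. Hence the first two partial derivatives of $\log\Expect(e^{\langle\bfomega,\bfs\rangle})$ at $\bfs=\bfzero$ equal $\Phi(n)\grad u(\bfzero)+\grad v(\bfzero)+\bigOh(\kappa_n^{-1})$ and $\Phi(n)H_u(\bfzero)+H_v(\bfzero)+\bigOh(\kappa_n^{-1})$, which are precisely $\Expect(\bfomega)$ and $\Var(\bfomega)$.

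To identify the limit law of $\bfomega^{*}$, substitute $\bfs=\boldsymbol{r}/\sqrt{\Phi(n)}$ for bounded $\boldsymbol{r}\in\C^{2}$ (admissible for $n$ large since the expansion is uniform on $\|\bfs\|\leq\tau$) and multiply by $e^{-\sqrt{\Phi(n)}\langle\grad u(\bfzero),\boldsymbol{r}\rangle}$. A second-order Taylor expansion of $u$ and $v$ about $\bfzero$, together with $u(\bfzero)=v(\bfzero)=0$, yields
\begin{equation*}
\Expect\bigl(e^{\langle\bfomega^{*},\boldsymbol{r}\rangle}\bigr)=\exp\Bigl(\tfrac12\boldsymbol{r}^{t}H_u(\bfzero)\boldsymbol{r}\Bigr)\bigl(1+\bigOh(\Phi(n)^{-1/2}+\kappa_n^{-1})\bigr),
\end{equation*}
uniformly on compact sets. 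Since $H_u(\bfzero)$ is symmetric and positive semidefinite (as a pointwise limit of rescaled variance-covariance matrices), the right-hand side is the moment generating function of a centered, possibly degenerate, bivariate Gaussian with covariance $\Sigma=H_u(\bfzero)$. Curtiss's continuity theorem then yields convergence of $\bfomega^{*}$ in distribution to this Gaussian.

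The last step is a linear-algebraic interpretation by the rank of $\Sigma$. Rank $2$ gives the usual non-degenerate bivariate normal; rank $0$ gives the point mass at $\bfzero$. For rank $1$, write $\Sigma=\lambda\bfmu\bfmu^{t}$ with $\lambda>0$ and a unit vector $\bfmu$; the limit is supported on the line $\R\bfmu$. If $\bfmu$ lies along a coordinate axis, then one coordinate of $\bfomega^{*}$ has variance tending to $0$, equivalently one of $v_1,v_2$ is $\bigOh(1)$, and the limit is exactly the direct product of a univariate normal and a Dirac mass; otherwise it is the image of such a product under the orthogonal rotation sending $\bfmu$ to a coordinate axis, so the two components of $\bfomega^{*}$ are asymptotically linked by a linear relation. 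The main point requiring care is the uniformity of the error as $\bfs\to\bfzero$ along $\bfs=\boldsymbol{r}/\sqrt{\Phi(n)}$, which is what makes the substitution and subsequent Taylor expansion rigorous; beyond that no serious obstacle arises.
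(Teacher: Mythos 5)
Your proposal is correct and follows essentially the same route as the paper: extract mean and variance by differentiating the uniform expansion, pass to the standardized vector whose transform converges to $\exp\bigl(\tfrac12\bfs^{t}H_u(\bfzero)\bfs\bigr)$, and then classify the (possibly degenerate) limiting Gaussian by the rank of $H_u(\bfzero)$. The only cosmetic differences are that the paper argues via characteristic functions rather than Curtiss's MGF theorem and reduces the non-axis-aligned rank-$1$ case by the shear $\bigl(\Omega_{n,1},\,-\tfrac{c}{v_1}\Omega_{n,1}+\Omega_{n,2}\bigr)$ instead of an orthogonal rotation.
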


\begin{proof}
The expressions \eqref{eq:quasi-power-expectation-variance} for expectation and
variance-covariance matrix follow from the moment generating function by differentiation.

The case of a regular Hessian matrix $H_{u}(\bfzero)$ is exactly the statement of
the $2$-dimensional Quasi-Power Theorem~\cite{Heuberger:2007:quasi-power}.

For the case of a singular Hessian matrix, we follow the proof of the Quasi-Power
Theorem~\cite{Heuberger:2007:quasi-power}. We consider the characteristic
function 
\begin{equation*}f_{n}(\bfs)=\exp\Big(-\frac12\bfs^{t}H_{u}(\bfzero)\bfs+\bigOh\Big(\frac{\|s\|^3+\|s\|}{\sqrt{\Phi(n)}}\Big)\Big)\big(1+\bigOh(\kappa_{n}^{-1})\big)\end{equation*}
of the standardized random
vector $\bfomega^{*}$.
Thus the characteristic function tends to
\begin{equation*}
  f(\bfs)=\exp\Big(-\frac12\bfs^{t}H_{u}(\bfzero)\bfs\Big).
\end{equation*}

If the Hessian matrix $H_{u}(\bfzero)$ has rank $0$, then
$f(\bfs)$ equals the identity function. Thus, the distribution function is
degenerate.

If the Hessian matrix $H_{u}(\bfzero)$ has rank $1$ and the variance of the
second coordinate $\Omega_{n,2}$ is $\bigOh(1)$, then
$H_{u}(\bfzero)=\bigl(\begin{smallmatrix}v_{1}&0\\0&0\end{smallmatrix}\bigr)$ for a $v_{1}\in\R$. Thus,
\begin{equation*}f(\bfs)=\exp\Big(-\frac12v_{1}^{2}s_{1}^{2}\Big)\cdot 1\end{equation*}
which is the characteristic function of the normal distribution with mean
$0$ and variance $v_{1}$ times the
characteristic function of the point mass at $0$.

If the Hessian matrix
$H_{u}(\bfzero)=\bigl(\begin{smallmatrix}v_{1}&c\\c&v_{2}\end{smallmatrix}\bigr)$
has rank $1$ with $v_{1}v_{2}\neq 0$, then we consider the random variables $X=\Omega_{n,1}$,
the first coordinate of $\bfomega$, and
$Z=-\frac{c}{v_{1}}\Omega_{n,1}+\Omega_{n,2}$. Then, the main term of the
variance-covariance matrix of $(X,Z)^{t}$ is
$\bigl(\begin{smallmatrix}v_{1}&0\\0&0\end{smallmatrix}\bigr)\Phi(n)$. Thus, $X$ is asymptotically normally
distributed and $Z$ is an asymptotically constant random variable (see
previous case).
\end{proof}

Using this version of the Quasi-Power Theorem, we prove the algebraic
description of independent transducers given in Theorem~\ref{thm:alg}.

\begin{proof}[Proof of Theorem~\ref{thm:alg}]
Let $a_{kln}$ be the number of sequences
of length $n$ with input sum $k$ such that
the corresponding output of the transducer $\T$ has sum $l$. We define
\[A(x,y,z)=\sum_{k\in\R}\sum_{l\in\R}\sum_{n=0}^{\infty}a_{kln}K^{-n}x^{k}y^{l}z^{n}.\]
Thus, the variable $x$
marks the input sum, $y$ marks the output sum, and $z$ marks the length
of the input. Then
$[z^n]A(x,y,z)$ is the probability
generating function of $\bfomega$, where $[z^{n}]b(z)$ is the coefficient of $z^{n}$ in the power series
$b(z)$.

Due to the block structure of $M_{\eps}'(y)$, we have
\begin{equation}\label{eq:A-frac}
\begin{aligned}A(x,y,z)&=\boldsymbol{u}^{t}\Big(I-\frac zK\sum_{\eps\in\mathcal A_{I}}x^{\eps}M_{\eps}'(y)\Big)^{-1}\boldsymbol{v}\\
&=\frac{F_{1}(x,y,z)}{F_{2}(x,y,z)\det\big(\mmatrix\big)},
\end{aligned}
\end{equation}

with $\boldsymbol{u}^{t}=(1,0,\ldots,0)$ for the initial state, $v_{s}=y^{a(s)}$ for the
final output label at state $s$ and $F_{1}(x,y,z)$ and $F_{2}(x,y,z)$ ``polynomials'' in $x$, $y$
and $z$. We use quotation marks because exponents of $x$ and $y$ might not be
integers. However, only finitely many summands occur.

The moment generating function of $\bfomega$ is
\begin{equation*}\Expect(e^{\langle
    \bfomega,\bfs\rangle})=[z^{n}]A(e^{s_{1}},e^{s_{2}},z).\end{equation*}

For extracting the coefficient, we investigate the dominant singularity of
$A(x,y,z)$. Since the final component is strongly
connected and aperiodic, we have a unique dominant simple eigenvalue of
$\sum_{\eps\in\mathcal A_{I}}x^{\eps}M_{\eps}(y)$ at $(x,y)^{t}=\bfone$ by the
theorem of Perron-Frobenius
(cf.~\cite{Godsil-Royle:2001:alggraphtheory}). Because the final component is complete, this dominant
eigenvalue is $K$, that is the size of the input alphabet $\mathcal A_{I}$. Thus, the unique dominant singularity of $f(x,y,z)^{-1}=\det\big(\mmatrix\big)^{-1}$
at $(x,y)^{t}=\bfone$
is a simple pole at $\rho(\bfone)=1$. Therefore, we have
$f_{z}(\bfone)\neq0$.

For $(x,y)^{t}$ in a small neighborhood of $\bfone$, there is a unique
dominant singularity $\rho(x,y)$ of $f(x,y,z)^{-1}$ due to the continuity of
eigenvalues.

Next, we consider the non-final components of the transducer. The
corresponding transducer $\T_{0}$ is not complete. Let $\T_{0}^{+}$ be the
complete transducer that is obtained from $\T_{0}$ by adding loops where necessary. The
dominant eigenvalue of $\T_{0}^{+}$ is $K$. As the
corresponding sums of transition matrices of $\T_{0}$ and $\T_{0}^{+}$ satisfy element-wise inequalities
but are not equal (at $(x,y)^{t}=\bfone$), the theorem of Perron-Frobenius
(cf.~\cite[Theorem 8.8.1]{Godsil-Royle:2001:alggraphtheory}) implies that the
dominant eigenvalues of $\T_{0}$ have absolute value less than $K$.
Thus, the dominant singularities of $F_{2}(1,1,z)^{-1}$ are at $\lvert
z\rvert>1$. By continuity, this also holds for a small neighborhood of $(x,y)^{t}=\bfone$.

As $A(1,1,z)=(1-z)^{-1}$, we obtain $F_{1}(\bfone)\neq 0$ and
$F_{1}(x,y,\rho(x,y))\neq 0$ for $(x,y)^{t}$ in a small neighborhood of
$\bfone$. Therefore, $\rho(x,y)$ is the simple dominant pole of $A(x,y,z)$ in
a small
neighborhood of $\bfone$.

The Laurent series of $A(x,y,z)$ at $z=\rho(x,y)$ is
\begin{equation*}A(x,y,z)=(z-\rho(x,y))^{-1}C(x,y)+\text{ power series in }(z-\rho(x,y))\end{equation*}
for a function $C(x,y)$ which is analytic in a neighborhood of $\bfone$ with
$C(\bfone)\neq 0$.
Thus, by singularity analysis~\cite{Flajolet-Sedgewick:ta:analy}, we have
\begin{equation*}\Expect(e^{\langle
  \bfomega,\bfs\rangle})=[z^{n}]A(e^{s_{1}},e^{s_{2}}, z)=e^{u(\bfs)n+v(\bfs)}\big(1+\bigOh(\kappa^{n})\big)\end{equation*}
with
\begin{align*}
u(\bfs)&=-\log\rho(e^{\bfs}),\\
v(\bfs)&=\log(-C(e^{\bfs})\rho(e^{\bfs})^{-1})
\end{align*}
and $\kappa<1$.

Theorem~\ref{thm:quasi} yields the expected value, the variance-covariance
matrix and the asymptotic normality of $\bfomega$. By implicit
differentiation, we obtain the stated expressions. The error terms for the
input sum are $0$ because the input letters are independent and identically
distributed. This also yields the explicit constants in \eqref{eq:e-1-v-1-explicit-trivial}.

Since the input alphabet $\mathcal A_{I}$ has at least two elements, the input sum
has nonzero asymptotic variance. Thus, the asymptotic variance-covariance matrix $\Sigma$ can have rank $1$ or
$2$. Now, we consider these two cases separately and prove the asserted equivalence.
\begin{enumerate}
\item Let $\Sigma$ have rank $1$. Then $\bfomega$ converges to a
  degenerate and a normally distributed random variable  if the asymptotic
  variance of the output sum is $0$; or a linear
  transformation thereof otherwise. Thus, $\bfomega$ is asymptotically independent if
  and only if the asymptotic variance of the sum of the output is $0$. As the
  rank of $\Sigma$ is $1$, the asymptotic variance is $0$ if and only
  if the asymptotic covariance is $0$.
\item Let $\Sigma$ be invertible. By Theorem~\ref{thm:quasi}, we obtain an asymptotic joint normal
 distribution. Thus, $\bfomega$ is asymptotically independent if and only if
 its asymptotic covariance is $0$.
\end{enumerate}
\end{proof}

\subsection{Combinatorial characterization of independent transducers}
To obtain the combinatorial characterization, we use a version of the Matrix-Tree Theorem as proved by Chaiken~\cite{Chaiken:1982:matrixtree}
and Moon~\cite{Moon:1994:matrixtree}. This version does not use trees, but
\emph{forests}, i.e., digraphs whose weak components are trees.

\begin{definition}Let $A$, $B\subseteq\{1,\ldots, N\}$. Let $\mathcal F_{A,B}$ be
  the set of all forests
  which are spanning subgraphs of the final component of the transducer $\T$ with $|A|$ trees such that
every tree is rooted at some vertex $a\in A$ and contains exactly one
vertex $b\in B$.

Let $A=\{i_{1},\ldots, i_{n}\}$ and $B=\{j_{1},\ldots,
  j_{n}\}$ with $i_{1}<\cdots<i_{n}$ and $j_{1}<\cdots<j_{n}$. For $F\in\mathcal F_{A,B}$, we define a function $g\colon B\to A$ by $g(j)=i$ if $j$ is in the tree of
$F$ which is rooted in vertex $i$. We further define the function
$h\colon A\to B$ by $h(i_{k})=j_{k}$ for $k=1,\ldots,n$. The composition
$g\circ h\colon A\to A$ is a permutation on $A$. We define $\sign F=\sign
g\circ h$.
\end{definition}

If $|A|\neq|B|$, then $\mathcal F_{A,B}=\emptyset$. If $|A|=|B|=1$, then
$\sign F=1$ and $\mathcal F_{A,B}$ consists of all spanning trees
rooted in $a\in A$.

\begin{theorem*}[All-Minors-Matrix-Tree Theorem~\cite{Chaiken:1982:matrixtree,Moon:1994:matrixtree}]
For  a directed
graph with loops, let $L=(l_{ij})_{1\leq i,j\leq N}$ be the Laplacian matrix, that is $\sum_{j=1}^{N}l_{ij}=0$ for every $i=1,\ldots,N$ and
 $-l_{ij}$ is the number of edges from $i$ to $j$ for $i\neq j$.
Then, for $|A|=|B|$, the minor $\det L_{A,B}$ satisfies
\begin{equation*}\det L_{A,B}=(-1)^{\sum_{i\in A}i+\sum_{j\in B}j}\sum_{F\in \mathcal
  F_{A,B}}\sign F\end{equation*}
where $L_{A,B}$ is the matrix $L$ whose rows with index in $A$ and columns with
index in $B$ are deleted.
\end{theorem*}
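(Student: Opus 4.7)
The plan is to prove the All-Minors-Matrix-Tree Theorem by combinatorially expanding the determinant of $L_{A,B}$ and applying a sign-reversing involution to cancel configurations containing ``bad'' cycles. First I would apply the Leibniz formula to write $\det L_{A,B}$ as a signed sum over bijections $\sigma\colon \{1,\ldots,N\}\setminus A \to \{1,\ldots,N\}\setminus B$, with summands $\sign(\sigma)\prod_{i\notin A} l_{i,\sigma(i)}$. Writing $a_{ij}$ for the number of edges from $i$ to $j$ and substituting $l_{ii}=\sum_{j\neq i}a_{ij}$ together with $l_{ij}=-a_{ij}$ for $i\neq j$, the diagonal entries split into sums over edge choices. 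Each resulting summand is indexed by a configuration in which every vertex $i\notin A$ selects exactly one outgoing edge: either an off-diagonal choice constrained by $\sigma$ (carrying a factor $-1$), or a choice arising from expanding the diagonal (no sign). Thus $\det L_{A,B}$ becomes a signed sum over partial functional digraphs on $\{1,\ldots,N\}$ with out-neighbours specified only for vertices outside $A$.

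Next I would set up a sign-reversing involution to eliminate all configurations containing a directed cycle that avoids $A$. On any such configuration, pick the cycle $C$ containing the smallest vertex and toggle whether that vertex's contribution came from the diagonal term or from the off-diagonal entry in column $\sigma$; the two cases differ by exactly one sign (from $l_{ij}=-a_{ij}$), and one can check that the permutation $\sigma$ adjusts so that $\sign(\sigma)$ matches up, yielding exact cancellation. What survives are precisely the configurations where the out-edges from $\{1,\ldots,N\}\setminus A$ form a spanning forest on $\{1,\ldots,N\}$ whose trees are rooted in $A$; the bijection $\sigma$ additionally forces each tree to contain exactly one vertex of $B$, so the surviving index set is $\mathcal F_{A,B}$.

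Finally I would verify the sign. For a surviving $F\in\mathcal F_{A,B}$, each of the $N-|A|$ vertices outside $A$ contributes a factor $(-1)$ from an off-diagonal entry, so the product yields $(-1)^{N-|A|}$. The Leibniz sign $\sign(\sigma)$ is exactly the sign of the permutation on $\{1,\ldots,N\}\setminus B$ induced by reading off the $B$-vertex inside each tree of $F$ along the map $g\colon B\to A$ and composing with $h\colon A\to B$ from the definition; combined with the prefactor for deleting rows in $A$ and columns in $B$, one recovers $(-1)^{\sum_{i\in A}i+\sum_{j\in B}j}\sign F$. The main obstacle is precisely this sign-bookkeeping: the interplay between the Leibniz sign, the $(-1)$ factors from off-diagonal Laplacian entries, the deletion prefactor, and the combinatorial definition $\sign F=\sign(g\circ h)$ is delicate and requires fixing an explicit ordering convention on tree roots and $B$-vertices. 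Once the involution is defined cleanly and the sign conventions are aligned, the remaining steps are routine bookkeeping; the paper will presumably import the statement directly from Chaiken and Moon rather than reproving it.
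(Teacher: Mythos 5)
First, note that the paper does not prove this statement at all: it is quoted as a known result of Chaiken and Moon and used as a black box, so there is no internal proof to compare against. Your sketch follows the standard route (Leibniz expansion of $\det L_{A,B}$ over bijections $\sigma\colon\{1,\ldots,N\}\setminus A\to\{1,\ldots,N\}\setminus B$, then a sign-reversing involution cancelling configurations containing a cycle that avoids $A$), which is essentially the published argument, and the overall architecture is sound.

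There are, however, two concrete problems in the sketch as written. The involution is not well defined at the level you describe: you cannot toggle whether a \emph{single} vertex of the offending cycle contributes through the diagonal or through the off-diagonal entry, because a single off-diagonal choice $l_{i,\sigma(i)}$ with $\sigma(i)\neq i$ forces $\sigma$ to move $\sigma(i)$ as well; the toggle must switch the \emph{entire} cycle $C$ between ``all vertices of $C$ are fixed points of $\sigma$ and select their cycle edge inside the diagonal sum'' and ``$C$ is a cycle of $\sigma$'', which changes the sign by $(-1)^{|C|}\cdot(-1)^{|C|-1}=-1$. More seriously, the final sign count is wrong: for a surviving forest $F\in\mathcal F_{A,B}$ it is \emph{not} true that each of the $N-|A|$ vertices outside $A$ contributes a factor $-1$ from an off-diagonal entry. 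In a surviving configuration the non-fixed part of $\sigma$ consists exactly of the directed paths in $F$ from the vertices of $B\setminus A$ to roots in $A\setminus B$; every other vertex outside $A$ is a fixed point of $\sigma$ and selects its out-edge from the (unsigned) diagonal term, so the number of $-1$ factors is the total length of those paths, not $N-|A|$. The classical case $A=B=\{1\}$ already refutes your count: there $\sigma$ is the identity on $\{2,\ldots,N\}$ for every surviving arborescence, the product carries no minus signs, and the minor equals the plain number of spanning trees rooted at $1$, whereas your accounting would predict a global factor $(-1)^{N-1}$. The correct bookkeeping must combine the path lengths, the sign of $\sigma$ as a bijection between two differently indexed sets, and the row/column deletion prefactor $(-1)^{\sum_{i\in A}i+\sum_{j\in B}j}$; this is exactly the delicate part you defer, and it is where the stated $\sign F=\sign(g\circ h)$ emerges, so it cannot be waved away as routine (or it should simply be delegated to the cited references, as the paper does).
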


The All-Minors-Matrix-Tree Theorem is still valid for $|A|\neq |B|$ if we assume
that the determinant of a non-square matrix is $0$. For notational simplicity,
we use this convention in the rest of this section.

The next lemma connects the derivatives of $f(x,y,z)$ with weighted sums
of functional digraphs. Theorem~\ref{thm:comb} follows immediately from this lemma and Theorem~\ref{thm:alg}.

\begin{lemma}\label{lem:deriv}
For $f(x,y,z)=\det\big(\mmatrix\big)$, we have
\begin{gather*}\begin{aligned}
    f_{x}(1,1,1) &=-K^{-N}\eps(\mathcal D_{1}), & \!\qquad f_{xy}(1,1,1)
    &=K^{-N}(\eps\delta(\mathcal D_{2})-\eps\delta(\mathcal D_{1})), \\
    f_{y}(1,1,1) &=-K^{-N}\delta(\mathcal D_{1}), & \!\qquad f_{xz}(1,1,1)
    &=K^{-N}(\eps\mathds1(\mathcal D_{2})-\eps\mathds1(\mathcal D_{1})), \\
    f_{z}(1,1,1) &=-K^{-N}\mathds1(\mathcal D_{1}), & \!\qquad
    f_{yz}(1,1,1) &=K^{-N}(\delta\mathds1(\mathcal D_{2})-\delta\mathds1(\mathcal
    D_{1})),    \end{aligned}\\
    \begin{aligned}
      f_{xx}(1,1,1)+f_{x}(1,1,1)&=K^{-N}(\eps\eps(\mathcal D_{2})-\eps\eps(\mathcal
      D_{1})),\\
      f_{yy}(1,1,1)+f_{y}(1,1,1)&=K^{-N}(\delta\delta(\mathcal D_{2})-\delta\delta(\mathcal
      D_{1})),\\
      f_{zz}(1,1,1)+f_{z}(1,1,1)&=K^{-N}(\mathds1\mathds1(\mathcal D_{2})-\mathds1\mathds1(\mathcal D_{1})).\\
    \end{aligned}\end{gather*}
\end{lemma}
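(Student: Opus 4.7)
The plan is to derive a single factorisation identity for $\det L(x,y,z) := K^{N} f(x,y,z)$, where $L = KI - z\sum_{\eps\in\mathcal A_I} x^\eps M_\eps(y)$, and then extract all nine formulas by straightforward differentiation of a product whose factors vanish at $(1,1,1)$. First I would expand $\det L$ via the Leibniz formula. Splitting each entry $L_{i,\sigma(i)}$ into its diagonal part $K\delta_{i,\sigma(i)}$ and off-diagonal part $-z\sum_{e:i\to\sigma(i)} x^{\eps(e)}y^{\delta(e)}$ and collecting the indices where the off-diagonal part is chosen into a vertex subset $V(H)$, the expansion becomes
\begin{equation*}
\det L \;=\; \sum_H K^{N-|V(H)|}\, z^{|V(H)|}\,(-1)^{c(H)}\prod_{e\in H} x^{\eps(e)}y^{\delta(e)},
\end{equation*}
where $H$ ranges over spanning subgraphs of the final component whose edge set is a vertex-disjoint union of $c(H)$ directed cycles covering $V(H)$.

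The key step is a bijective reinterpretation. Because the final component is complete, each of the $N - |V(H)|$ vertices outside $V(H)$ has exactly $K$ outgoing edges, so the factor $K^{N-|V(H)|}$ counts tuples of one out-edge per vertex outside $V(H)$. Attaching such a tuple to $H$ produces a spanning functional digraph $D$; conversely, from a pair $(D,\Sigma)$ with $\Sigma\subseteq\mathcal C_D$ one recovers $H = \bigcup_{C\in\Sigma} C$ together with the out-edges of $D$ at the remaining vertices. Using $(-1)^{c(H)} = (-1)^{|\Sigma|}$, summation over $\Sigma$ factorises and yields
\begin{equation*}
\det L(x,y,z) \;=\; \sum_D \prod_{C\in\mathcal C_D}\Bigl(1 - z^{\mathds 1(C)}\prod_{e\in C} x^{\eps(e)}y^{\delta(e)}\Bigr),
\end{equation*}
where $D$ ranges over all spanning functional digraphs of the final component.

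Every factor on the right vanishes at $(x,y,z) = (1,1,1)$, so by the product rule a partial derivative of order $k$ at that point is nonzero only when $c(D) \leq k$. For the first-order derivatives only $D \in \mathcal D_1$ contributes, and differentiating the single factor immediately recovers the stated formulas for $f_x$, $f_y$, $f_z$. For the second-order derivatives, contributions split between $\mathcal D_1$ (one factor differentiated twice) and $\mathcal D_2$ (each of two factors differentiated once); the mixed second derivatives $f_{xy}$, $f_{xz}$, $f_{yz}$ already come out in the claimed form, whereas the pure second derivatives produce a falling factorial $\eps(C)(\eps(C)-1)$ (and the analogues for $\delta$ and $\mathds 1$) on $\mathcal D_1$. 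Adding $f_x$, $f_y$, $f_z$ respectively upgrades this falling factorial to the square $\eps(C)^2$, $\delta(C)^2$, $\mathds 1(C)^2$, which is exactly what the identities for $f_{xx}+f_x$, $f_{yy}+f_y$, $f_{zz}+f_z$ demand.

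The hardest part will be the sign bookkeeping in the bijection: the Leibniz weight $(-z)^{|V(H)|}(-1)^{|V(H)|-c(H)}$ must be shown to collapse to $z^{|V(H)|}(-1)^{|\Sigma|}$ after the reindexing by $(D,\Sigma)$, so that summation over $\Sigma$ genuinely yields the factorisation above. Once that identity is established, the nine formulas of the lemma follow from routine product-rule differentiation at $(1,1,1)$.
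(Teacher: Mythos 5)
Your proposal is correct, and it reaches the lemma by a genuinely different route from the paper. The paper differentiates $f$ row by row, Laplace-expands along the differentiated row(s), invokes the All-Minors Matrix-Tree Theorem of Chaiken and Moon to write the resulting Laplacian minors as signed sums over spanning forests, and then reassembles each forest together with the differentiated edge(s) into a functional digraph with one or two components; the delicate point there is the sign bookkeeping for the two-row expansion ($\sigma_{de}$ and $\sign F$). You instead establish the closed-form cycle-cover factorisation $K^{N}f(x,y,z)=\sum_{D}\prod_{C\in\mathcal C_{D}}\bigl(1-z^{\mathds 1(C)}x^{\eps(C)}y^{\delta(C)}\bigr)$ over spanning functional digraphs $D$ of the final component, and then read off all nine identities by the product rule, since every factor vanishes at $\bfone$ and hence only $\mathcal D_{1}$ survives first-order and only $\mathcal D_{1}\cup\mathcal D_{2}$ second-order differentiation; the falling factorials from the pure second derivatives are exactly compensated by adding the first derivatives, as you note. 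The step you flag as hardest is in fact immediate: $\sign\sigma=(-1)^{|V(H)|-c(H)}$ combined with $(-z)^{|V(H)|}$ gives $(-1)^{c(H)}=(-1)^{|\Sigma|}$, and the binomial-type sum over $\Sigma\subseteq\mathcal C_{D}$ then factors. The one ingredient you use crucially is completeness of the final component, which turns $K^{N-|V(H)|}$ into the exact number of ways to extend a disjoint union of cycles to a spanning functional digraph; the paper needs the same hypothesis to make $KI-\sum_{\eps}M_{\eps}(1)$ a Laplacian. Your argument is more self-contained (no Matrix-Tree Theorem, no forest sign analysis) and yields a reusable identity for higher derivatives; the paper's argument leans on a citable classical theorem and adapts more readily to settings with non-constant out-degrees.
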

\begin{proof}
 The idea of the proof is as follows: First, we compute the derivatives and write them as
  sums over all states. Using the All-Minors-Matrix-Tree Theorem, we change the summation to a sum over
 forests. In the next step, we again change to a sum over functional digraphs.

 Let $u_{1}$, $u_{2}$ be any of the variables $x$, $y$ or $z$. For a matrix $M=(m_{ij})_{1\leq i,j\leq N}$,
 we define the matrix $M_{k:u_{1}}=(\hat{m}_{ij})_{1\leq i,j\leq N}$ with
 $\hat{m}_{ij}=m_{ij}$ for $i\neq k$ and $\hat{m}_{kj}=\frac{\partial}{\partial
 u_{1}}m_{kj}$. Thus $M_{k:u_{1}}$ is the matrix $M$ where row $k$ is
 differentiated with respect to $u_{1}$. 

 We further define the derivatives at $\bfone$ as 
 \begin{align*}D_{u_{1}}(\,\cdot\,)&=\frac{\partial}{\partial
       u_{1}}(\,\cdot\,)\Big\vert_{\bfone}\\\intertext{and}
 D_{u_{1}u_{2}}(\,\cdot\,)&=\frac{\partial^{2}}{\partial
       u_{1}\partial u_{2}}(\,\cdot\,)\Big\vert_{\bfone}.
\end{align*}

Applying the product rule to the definition
of the determinants gives us
\begin{align*}D_{u_{1}}(f)&=\sum_{j=1}^{N}\det\Big(\mmatrix\Big)_{j:u_{1}}\Big\vert_{\bfone},\\
D_{u_{1}u_{2}}(f)&=\sum_{i=1}^{N}\sum_{j=1}^{N}\det\Big(\mmatrix\Big)_{i:u_{1},\,j:u_{2}}\Big\vert_{\bfone}.\end{align*}
In these equations, we have a sum over all states.

Since our original matrix $\mmatrix$ is sparse, and $(\mmatrix)_{j:u_{1}}$
is even sparser, we use Laplace expansion along row $j$ to determine these
determinants. If $i\neq j$, we use Laplace expansion along row
$i$ and $j$ to determine $\det(\mmatrix)_{i:u_{1},\,j:u_{2}}$ for the second
derivatives. If $i=j$, we only expand along row $j$. Depending on the
variable of differentiation, there are at most $K$ nonzero values in row $j$
after differentiation.

For a transition $e$, we denote by $t(e)$, $h(e)$, $\eps(e)$ and $\delta(e)$
the tail, the head, the input and the output of the
transition $e$, respectively. Furthermore, let $w_{e}=\frac1Kx^{\eps(e)}y^{\delta(e)}z$ be the
weight of the transition $e$.

If we use Laplace expansion along two different rows, we must be careful with the
sign. Therefore, we define 
\begin{equation*}\sigma_{de}=(-1)^{[t(e)>t(d)]+[h(e)>h(d)]}\end{equation*}
for two transitions $d$ and $e$. Here, we use Iverson's notation, that is
$[$\emph{expression}$]$ is $1$ if \emph{expression} is true and $0$ otherwise (cf.~\cite{Graham-Knuth-Patashnik:1994}).

Let $L$ be the Laplacian matrix of the underlying graph, that is
\begin{equation*}L=KI-\sum_{\eps\in\mathcal A_{I}}M_{\eps}(1).\end{equation*}

Recall the notation $L_{A,B}$ for the matrix where the rows corresponding to $A$ and the columns corresponding to $B$ have been removed. Laplace expansion yields
 \begin{align*}
 D_{u_{1}}(f)&=-K^{-N+1}\sum_{j=1}^{N}\sum_{\substack{e\in\mathcal E\\ t(e)=j}}(-1)^{t(e)+h(e)}D_{u_{1}}(w_{e})\det(L_{\{t(e)\},\{h(e)\}}),\\
 D_{u_{1}u_{2}}(f)&=-K^{-N+1}\sum_{j=1}^{N}\sum_{\substack{e\in\mathcal E\\
   t(e)=j}}(-1)^{t(e)+h(e)}D_{u_{1}u_{2}}(w_{e})\det(L_{\{t(e)\},\{h(e)\}})\\
 &\qquad+K^{-N+2}\sum_{i=1}^{N}\sum_{\substack{j=1\\j\neq
     i}}^{N}\sum_{\substack{d\in\mathcal E\\ t(d)=i}}\sum_{\substack{e\in\mathcal E\\
   t(e)=j}}\big((-1)^{t(d)+h(d)+t(e)+h(e)}\sigma_{de}\\&\hspace{8em}\cdot D_{u_{1}}(w_{d})D_{u_{2}}(w_{e})\det(L_{\{t(d),t(e)\},\{h(d),h(e)\}})\big).
 \end{align*}

Next, we use the All-Minors-Matrix-Tree Theorem and change the summation over all rows to a summation over forests. We obtain
 \begin{align*}
 D_{u_{1}}(f)&=-K^{-N+1}\sum_{e\in\mathcal
   E}D_{u_{1}}(w_{e})\sum_{F\in\mathcal F_{\{t(e)\},\{h(e)\}}}\sign F,\\
 D_{u_{1}u_{2}}(f)&=-K^{-N+1}\sum_{e\in\mathcal
   E}D_{u_{1}u_{2}}(w_{e})\sum_{F\in\mathcal F_{\{t(e)\},\{h(e)\}}}\sign F\\
 &\qquad+K^{-N+2}\sum_{d\in\mathcal E}\sum_{\substack{e\in\mathcal E\\e\neq d}}\Big(\sigma_{de}D_{u_{1}}(w_{d})D_{u_{2}}(w_{e})\\&\hspace{8em}\sum_{F\in\mathcal
 F_{\{t(d),t(e)\},\{h(d),h(e)\}}}\sign F\Big).
 \end{align*}

Let $F\in\mathcal F_{\{t(e)\},\{h(e)\}}$ be a forest for a transition $e\in\mathcal E$. Then
$F+e$ is a spanning functional digraph with one component. Let $F\in\mathcal
F_{\{t(d),t(e)\},\{h(d),h(e)\}}$ be a forest for transitions $d,e\in\mathcal E$. Then
$F+d+e$ is a spanning functional digraph with
one or two components, depending on $\sigma_{de}\sign
F$. If $\sigma_{de}\sign
F=1$, then it has two components. Otherwise, it has one component. Now we
can change the summation into a sum over functional digraphs and obtain 
 \begin{align*}
 D_{u_{1}}(f)&=-K^{-N+1}\sum_{D\in\mathcal D_{1}}\sum_{C\in\mathcal
   C_{D}}\sum_{e\in C}D_{u_{1}}(w_{e}),\\
 D_{u_{1}u_{2}}(f)&=-K^{-N+1}\sum_{D\in\mathcal
   D_{1}}\sum_{C\in\mathcal C_{D}}\sum_{e\in C}D_{u_{1}u_{2}}(w_{e})\\
 &\qquad+K^{-N+2}\sum_{D\in\mathcal
   D_{2}}\sum_{C_{1}\in\mathcal C_{D}}\sum_{\substack{C_{2}\in\mathcal
     C_{D}\\C_{2}\neq C_{1}}}\sum_{d\in
   C_{1}}\sum_{e\in C_{2}}D_{u_{1}}(w_{d})D_{u_{2}}(w_{e})\\
&\qquad-K^{-N+2}\sum_{D\in\mathcal
   D_{1}}\sum_{C\in\mathcal C_{D}}\sum_{d\in C}\sum_{\substack{e\in C\\e\neq d}}D_{u_{1}}(w_{d})D_{u_{2}}(w_{e}).
 \end{align*}

For a transition $e$, we know the first derivatives
\begin{alignat*}{6}
D_{x}(w_{e})&=\frac1K\eps(e),&\qquad
D_{y}(w_{e})&=\frac1K\delta(e),&\qquad
D_{z}(w_{e})&=\frac1K\mathds 1(e),
\end{alignat*}
and the second derivatives
\begin{alignat*}{4}
D_{xy}(w_{e})&=\frac1K\eps(e)\delta(e),&\qquad
D_{xx}(w_{e})&=\frac1K\eps(e)(\eps(e)-1),\\
D_{xz}(w_{e})&=\frac1K\eps(e)\mathds 1(e),&\qquad
D_{yy}(w_{e})&=\frac1K\delta(e)(\delta(e)-1),\\
D_{yz}(w_{e})&=\frac1K\delta(e)\mathds 1(e),&\qquad D_{zz}(w_{e})&=0.
\end{alignat*}

Thus, we obtain the formulas stated in the lemma.
\end{proof}

\subsection{Bounded Variance and singular asymptotic vari\-ance-\-co\-vari\-ance
  matrix}
We next give the proof of the equivalence of the three statements in Theorem~\ref{thm:var}, including
the bounded variance.

\begin{proof}[Proof of Theorem~\ref{thm:var}]
We first prove~\itemref{it:var0} $\Leftrightarrow$~\itemref{it:somewalks} by giving an alternative representation of the generating
function $A(x,y,z)$ from the proof of Theorem~\ref{thm:alg}. Then we prove the
equivalence \itemref{it:somewalks}~$\Leftrightarrow$~\itemref{it:allcycles}.

\begin{description}[font=\normalfont]
\item[\itemref{it:var0} $\Leftrightarrow$ \itemref{it:somewalks}]
WLOG, we assume that the expected value $\Expect
(\out(X_{n}))$ is a $\bigOh(1)$. Otherwise, we have
$\Expect(\out(X_{n}))=e_{2}n+\bigOh(1)$  for some constant $e_{2}$ (see Theorem~\ref{thm:alg}). Then we subtract $e_{2}$ from the output
of every transition, as for Corollary~\ref{cor:comb-indep}. Under this assumption,
Theorem~\ref{thm:comb} implies that \itemref{it:somewalks} can only hold with $k=0$.

As the input sum is inconsequential, we consider
$A(1,y,z)$. For brevity, we write $A(y,z)$ instead. We obtain
\begin{equation*}A(y,z)=\boldsymbol{u}^{t}\Big(I-\frac zK\sum_{\eps\in\mathcal A_{I}}M_{\eps}'(y)\Big)^{-1}\boldsymbol{v}\end{equation*}
where $M_{\eps}'$ for $\eps\in\{0,\ldots,q-1\}$ are the transition matrices of $\T$.

Since $\T$ is complete, finally connected and finally aperiodic, $A(1,z)$ has
a simple dominant pole at $z=1$ (see the proof of Theorem~\ref{thm:alg}). We
know that
\begin{equation}\label{eq:exp-and-var}\begin{aligned}
\Expect(\out(X_{n}))&=[z^{n}]A_{y}(1,z)=\bigOh(1),\\
\Var(\out(X_{n}))&=[z^{n}]A_{yy}(1,z)+\bigOh(1).
\end{aligned}\end{equation}

Let $s$ be any state of the final component. Each path starting at state $1$
either does or does not visit state $s$. In
the first case, this path can be decomposed into a path leading to state $s$
and visiting $s$ only once, followed by a sequence of closed walks visiting
state $s$ exactly once, and a path starting in $s$ and not returning to
$s$. We translate this decomposition into an equation for the corresponding
generating functions.

Let $\mathcal P^{s}$ be the set of all walks in $\T$ which start at state $s$
but never return to state $s$. All other states can be visited arbitrarily
often. We define the corresponding generating function $P^{s}(y,z)=\sum_{P\in\mathcal P^{s}}y^{\delta(P)} z^{\mathds
  1(P)}K^{-\mathds 1(P)}$.
Then $[z^{n}]P^{s}(y,z)$ is the probability
generating function of the output sum over walks in $\mathcal P^{s}$ of length $n$. 

Let $\mathcal P^{1s}$ be the set of all walks in $\T$ which start at state $1$
and lead to state $s$, visiting $s$ exactly once. If $s=1$, this set consists only
of the path of length $0$. The corresponding generating function is called $P^{1s}(y,z)$.

Let $\mathcal P^{1}$ be the set of all walks in $\T$ which start at state $1$
and never visit state $s$. If $s=1$, this set is empty. The corresponding
generating function is called $P^{1}(y,z)$.

Let $\mathcal C^{s}$ be the set of all closed walks in $\T$ which visit
state $s$ exactly once. All other states can be visited arbitrarily often. The corresponding
generating function is called $C^{s}(y,z)$.

Thus, we have
\begin{equation}\label{eq:alt-representation-A}A(y,z)=P^{1}(y,z)+\frac{P^{1s}(y,z)P^{s}(y,z)}{1-C^{s}(y,z)}.\end{equation}

Let $\alpha$ be any of the superscripts $1$, $1s$ or $s$. By deleting the
transitions leading to $s$, we have
\begin{equation*}P^{\alpha}(y,z)=(\boldsymbol{u}^{\alpha})^{t}\Big(I-\frac zK\sum_{\eps\in\mathcal A_{I}}M_{\eps}'(y)E\Big)^{-1}\boldsymbol{v}^{\alpha},\end{equation*}
where $E=\diag(1,\ldots,1,0,1,\ldots,1)$ and $\boldsymbol{u}^{\alpha}$ and $\boldsymbol{v}^{\alpha}$ are fixed vectors. The position of the zero on the
diagonal of $E$ corresponds to the state $s$. The vectors $\boldsymbol{u}^{\alpha}$ and $\boldsymbol{v}^{\alpha}$ depend on $\alpha$ and
may include the output of the transitions leading to $s$, but $E$ is
independent of $\alpha$. Since we have the element-wise
inequalities 
\begin{equation*}0\leq \sum_{\eps\in\mathcal A_{I}}M_{\eps}'(1)E\leq
\sum_{\eps\in\mathcal A_{I}}M_{\eps}'(1)\end{equation*}
and
$\sum_{\eps\in\mathcal A_{I}}M'_{\eps}(1)E\neq
\sum_{\eps\in\mathcal A_{I}}M'_{\eps}(1)$, we know that the
spectral radii satisfy
\begin{equation*}\rho\Big(\sum_{\eps\in\mathcal A_{I}}M_{\eps}'(1)E\Big)<\rho\Big(\sum_{\eps\in\mathcal A_{I}}M_{\eps}'(1)\Big)=K\end{equation*}
due to the theorem of
Perron-Frobenius (cf.~\cite[Theorem 8.8.1]{Godsil-Royle:2001:alggraphtheory}). Here, it is important that $s$ lies in the final component. Thus, the
dominant singularities of $P^{\alpha}(1,z)$ are at $\lvert z\rvert>1$. Furthermore,
we know that $P^{s}(1,1)>0$ and $P^{1s}(1,1)>0$ by the definition as generating functions.

Because $z=1$ is a simple pole of $A(1,z)$, no pole of $P^{1}(1,z)$ and $P^{1s}(1,z)P^{s}(1,z)$,  and
$P^{1s}(1,1)P^{s}(1,1)\neq0$, it is a simple root of
$1-C^{s}(1,z)$ by \eqref{eq:alt-representation-A}. Thus, we can write $1-C^{s}(1,z)=(z-1)g(z)$ for a suitable
function $g(z)$ with $g(1)\neq0$.

By~\eqref{eq:exp-and-var},~\eqref{eq:alt-representation-A} and singularity analysis~\cite{Flajolet-Sedgewick:ta:analy}, we obtain
\begin{align*}
\bigOh(1)=\Expect(\out(X_{n}))&=P^{1s}(1,1)P^{s}(1,1)C^{s}_{y}(1,1)g(1)^{-2}n+\bigOh(1).\nonumber
\end{align*}
Therefore, $C^{s}_{y}(1,1)=0$.

Similarly, we have
\begin{align}
\Var(\out(X_{n}))&=P^{1s}(1,1)P^{s}(1,1)C^{s}_{yy}(1,1)g(1)^{-2}n+\bigOh(1),\label{eq:var0}
\end{align}
taking into account that $C^{s}_{y}(1,1)=0$.

By~\eqref{eq:var0}, $\Var(\out(X_{n}))=\bigOh(1)$ is equivalent to $C^{s}_{yy}(1,1)=0$, and thus, $C^{s}_{yy}(1,1)+C^{s}_y(1,1)=0$
as $C^{s}_y(1,1)=0$. By the definition of $C^{s}(y,z)$, this is equivalent to
\begin{equation*}
\sum_{C\in\mathcal C^{s}}\delta(C)^{2}K^{-\mathds1(C)}=0,
\end{equation*}
and thus $\delta(C)=0$ for all $C\in\mathcal C^{s}$.
\item[\itemref{it:somewalks} $\Rightarrow$ \itemref{it:allcycles}]
Let $\mathcal C^{s}$ be the set of all closed walks in the final component of $\T$ which visit
state $s$ exactly once. If $D$ is any cycle of the final component of the transducer, then one of the following occurs.
\begin{itemize}
\item No visits of state $s$: Let $i$ be a vertex of $D$. Because the final component is strongly
  connected, there exists a closed walk $C\in\mathcal C^{s}$ with $s$, $i\in C$. Let
  $D'$ be the combined closed walk of $D$ and $C$. Then, $D'\in\mathcal C^{s}$, and so we have 
  \begin{equation*}\delta(D)=\delta(D')-\delta(C)=k\mathds 1(D')-k\mathds 1(C)=k\mathds 1(D).\end{equation*}
\item One visit of state $s$: Then we have $D\in\mathcal C^{s}$ and $\delta(D)=k\mathds 1(D)$.
\end{itemize}
\item[\itemref{it:allcycles} $\Rightarrow$ \itemref{it:somewalks}]As a closed walk
  visiting $s$ exactly once can be decomposed into cycles, this is obvious.
\end{description}
\end{proof}

Next, we prove the equivalence for the quasi-deterministic output sum.

\begin{proof}[Proof of Theorem~\ref{thm:quasi-det}]\ 

\begin{description}[font=\normalfont]
\item[\itemref{it:quasidet} $\Rightarrow$ \itemref{it:allcycles-nonfinal}]Let $C$ be
  an arbitrary cycle of the transducer and $P$ be a path from the initial
  state $1$ to any state of the cycle. Let $z_{n}$ be the input sequence along
  the combined walk consisting of $P$ and $n$ times $C$. Then, by quasi-determinism and the definition of the output, we have
\begin{equation*}k(\mathds 1(P)+n\mathds 1(C))+\bigOh(1)=\out(z_{n})=\delta(P)+n\delta(C)+\bigOh(1).
\end{equation*}
Thus, $n(\delta(C)-k\mathds 1(C))$ is bounded by a constant depending on $P$ and
$C$, but independent of $n$. Therefore, we know that $\delta(C)=k\mathds 1(C)$.
\item[\itemref{it:allcycles-nonfinal} $\Rightarrow$ \itemref{it:quasidet}]WLOG, we assume $k=0$ (replace $\delta(e)$ by $\delta(e)-k$ for all transitions $e$).
For every $z\in\mathcal A_{I}^{*}$, we have
$\lvert\out(z)\rvert\leq\sum_{e\in\mathcal
  E}\lvert\delta(e)\rvert+\max_{s\in\{1,\ldots, S\}}\lvert a(s)\rvert$ because
all cycles have output sum $0$ so that every transition contributes at most
once to $\out(z)$. Therefore, we have a quasi-deterministic random variable
$\out(X_{n})=\bigOh(1)$.
\end{description}
\end{proof}

Now, we consider transducers whose output alphabet is $\{0,1\}$ and prove that
there are only trivial cases with a bounded variance.
\begin{proof}[Proof of Corollary~\ref{cor:01output}]
We know that the output digits $(0,\ldots,0)$ and $(1,\ldots,1)$ have
asymptotic variance~$0$.

Assume that the asymptotic variance is $0$. Let $k$ be the constant given in
Theorem~\ref{thm:var}. Then, we know $k\in[0,1]$. By the
aperiodicity of the final component, there exist cycles
$C_{1},\ldots,C_{n}$ of coprime length and therefore integers $b_{1},\ldots,b_{n}$ with
\begin{equation*}1=b_{1}\mathds 1(C_{1})+\cdots+b_{n}\mathds1(C_{n}).\end{equation*}
Thus,
\begin{equation*}k=b_{1}\delta(C_{1})+\cdots+b_{n}\delta(C_{n})\in\Z\end{equation*}
and hence, $k\in\{0,1\}$. Therefore, $(0,\ldots,0)$ and $(1,\ldots,1)$ are
the only output digits with asymptotic variance $0$.
\end{proof}

This last proof shows the equivalence of the statements in
Corollary~\ref{thm:reg}, including a transducer with a singular asymptotic
variance-covariance matrix.
\begin{proof}[Proof of Corollary~\ref{thm:reg}]WLOG, we assume that both
  expected values $\Expect(\out(X_{n}))$ and $\Expect(\inputsum(X_{n}))$ are $\bigOh(1)$.

We know
that the asymptotic variance $v_{1}$ of the input is non-zero because $\mathcal A_{I}$
consists of at least two elements. As in the last paragraph of the proof of
Theorem~\ref{thm:quasi}, we consider the random variables
$Y_n = \inputsum(X_{n})$ and $Z_n=-\frac{c}{v_{1}}\inputsum(X_{n})+\out(X_{n})$ and their
variance-covariance matrix $\bigl(\begin{smallmatrix}v_{1}&0\\0&v_{2}-\frac{c^{2}}{v_{1}}\end{smallmatrix}\bigr)$.
The matrix $\Sigma$ is singular if and only if the asymptotic
variance of $Z_{n}$ is $0$.

Thus, we consider a transducer with the same input as the original transducer $\T$
for which the output of a transition $e$ is $-\frac{c}{v_{1}}\eps(e)+\delta(e)$. By Theorem~\ref{thm:var}, the output sum of this new transducer
has asymptotic variance $0$ if and only if there exists an $m\in\R$ such that
\begin{equation*}-\frac{c}{v_{1}}\eps(C)+\delta(C)=m\mathds1(C)\end{equation*}
for every cycle $C$ of the final component. Since the expected value of $Z_{n}$ is $\bigOh(1)$, we have
$m=0$.

The second statement follows from Theorem~\ref{thm:var}.
\end{proof}
\bibliographystyle{amsplain}
\bibliography{cheub}

\end{document}